\date{}
\newcounter{lemma}
\newcounter{corollary}
\newcounter{remark}
\newcounter{theorem}
\newcounter{proposition}
\newcounter{example}
\begin{document}

\markboth{D.~ROMASH, E.~SEVOST'YANOV}{\centerline{On uniformly
lightness ...}}

\def\cc{\setcounter{equation}{0}
\setcounter{figure}{0}\setcounter{table}{0}}

\overfullrule=0pt


\title{{\bf On uniformly lightness of one class of mappings and Koebe-Bloch theorem}}

\author{D.~ROMASH$^1$\footnote{dromash@num8erz.eu}, E.~SEVOST'YANOV$^{1,
2}$\footnote{Corresponding author, esevostyanov2009@gmail.com}}

\maketitle

{\small $^{1}$Zhytomyr Ivan Franko State University, 40, Velyka
Berdychivs'ka Str., 10 008  Zhytomyr, UKRAINE

$^{2}$Institute of Applied Mathematics and Mechanics of NAS of
Ukraine, 19 Henerala Batyuka Str., 84 116 Slov'yans'k, UKRAINE

{\bf Key words and phrases:} mappings with finite and bounded
distortion, quasiconformal mappings, lower distance estimates

\medskip

\maketitle

\medskip
{\bf Abstract.} We consider mappings satisfying a certain estimate
of the distortion of the modulus of families of paths, similar to
the geometric definition of quasiconformal mappings. Under
appropriate restrictions, we show that the class of such mappings is
uniformly light, i.e., the chordal diameter of the image of continua
whose diameter is bounded below is also bounded below uniformly over
the class. This result holds for any domain $D$ and for continua $C$
lying in the fixed compactum $K$ in this domain. If $D$ has a
special geometry, this result may be true in the general case, i.e.,
for any continua $C$ in $D.$

\bigskip
{\bf 2010 Mathematics Subject Classification: Primary 30C65;
Secondary 31A15, 31B25}

\section{Introduction}
The paper is devoted to mappings with bounded and finite distortion,
see \cite{AFW}, \cite{Cr$_1$}--\cite{Cr$_2$}, \cite{MRSY}, \cite{M},
\cite{PSS}, \cite{Ri}, \cite{RV} and \cite{SalSt}. Recall that a
mapping $f:D\rightarrow {\Bbb R}^n$ is called {\it discrete} if the
pre-image $\{f^{-1}\left(y\right)\}$ of each point $y\,\in\,{\Bbb
R}^n$ consists of isolated points, and {\it is open} if the image of
any open set $U\subset D$ is an open set in ${\Bbb R}^n.$ A mapping
$f:D\rightarrow \overline{{\Bbb R}^n}$ is said to be {\it light} if
${\rm dim\,}\{f^{\,-1}(y)\}=0$ for every $y\in \overline{{\Bbb
R}^n}.$ It is well known that $K$-quasiregular mappings are discrete
and open, see, for example~\cite[Theorem~I.4.1]{Ri}. Observe that,
all quasiregular mappings $f:D\rightarrow {\Bbb R}^n$ satisfy the
condition
\begin{equation*}\label{eq2}
M(\Gamma)\leqslant K N(f, D) M(f(\Gamma))
\end{equation*}
for any family $\Gamma$ of paths $\gamma$ in a domain $D,$ where $M$
is a conformal modulus of families of paths, $K={\rm ess \sup}\,
K_O(x, f),$
\begin{gather*}K_{O}(x,f)\quad =\quad \left\{
\begin{array}{rr}
\frac{\Vert f^{\,\prime}(x)\Vert^n}{|J(x,f)|}, & J(x,f)\ne 0,\\
1, & f^{\,\prime}(x)=0, \\
\infty, & {\rm in\,other\,cases}
\end{array}
\right.\,,\\
\Vert f^{\,\prime}(x)\Vert\,=\,\max\limits_{h\in {\Bbb R}^n
\backslash \{0\}} \frac {|f^{\,\prime}(x)h|}{|h|}\,,\quad
J(x,f)=\det f^{\,\prime}(x)\,,\\
N(y, f, D)\,=\,{\rm card}\,\left\{x\in D: f(x)=y\right\}\,, N(f,
D)\,=\,\sup\limits_{y\in{\Bbb R}^n}\,N(y, f, D)\,, \end{gather*}
see, e.g., \cite[Theorem~6.7.II]{Ri}. Let us now pose the question
of the openness and discreteness of the mapping $f:D\rightarrow
{\Bbb R}^n,$ $n\geqslant 2,$ satisfying the condition
\begin{equation} \label{eq2*B}
M(\Gamma )\leqslant \int\limits_{f(D)} Q(y)\cdot \rho_*^n (y) dm(y)
\end{equation}
for every $\rho_*\in {\rm adm}\,f(\Gamma)$ with respect to the
conformal modulus $M(\Gamma):=M_n(\Gamma)$ and a given function
$Q:{\Bbb R}^n\rightarrow [0, \infty].$ Note that even under
relatively good conditions on the function $Q,$ the mapping $f$ is,
generally speaking, neither discrete nor open. For example, even if
$Q$ is equal to the identical constant, this is generally not the
case (see the examples given in~\cite[Section~10]{Sev$_2$}).
Nevertheless, the mapping~$f$ in this case is light; see
\cite[Theorem~1]{SevSkv}, cf.~\cite[Theorem~10.1]{Sev$_3$},
\cite[Theorem~1.1]{Cr$_1$}. The goal of this paper is to obtain an
even more nontrivial result, namely, to show that the family of
mappings in~(\ref{eq2*B}) is ``light in totality,'' i.e., {\it it
cannot arbitrarily compress the family of continua under appropriate
conditions on the function $Q$ and the geometry of the given
domain.} For more detailed explanations and a statement of the
problem, we turn to the definitions below.

\medskip
Below $dm(x)$ denotes the element of the Lebesgue measure in ${\Bbb
R}^n.$ Everywhere further the boundary $\partial A $ of the set $A$
and the closure $\overline{A}$ should be understood in the sense of
the extended Euclidean space $\overline{{\Bbb R}^n}.$ Recall that, a
Borel function $\rho:{\Bbb R}^n\,\rightarrow [0,\infty] $ is called
{\it admissible} for the family $\Gamma$ of paths $\gamma$ in ${\Bbb
R}^n,$ if the relation
\begin{equation*}\label{eq1.4}
\int\limits_{\gamma}\rho (x)\, |dx|\geqslant 1
\end{equation*}
holds for all (locally rectifiable) paths $ \gamma \in \Gamma.$ In
this case, we write: $\rho \in {\rm adm} \,\Gamma .$  Given
$p\geqslant 1,$  the {\it $p$-modulus} of $\Gamma $ is defined by
the equality
\begin{equation*}\label{eq1.3gl0}
M_p(\Gamma)=\inf\limits_{\rho \in \,{\rm adm}\,\Gamma}
\int\limits_{{\Bbb R}^n} \rho^p (x)\,dm(x)\,.
\end{equation*}
Given $x_0\in{\Bbb R}^n,$ we put
\begin{gather*}
B(x_0, r)=\{x\in {\Bbb R}^n: |x-x_0|<r\}\,, \quad {\Bbb B}^n=B(0,
1)\,,\\
S(x_0,r) = \{ x\,\in\,{\Bbb R}^n : |x-x_0|=r\}\,. \end{gather*}
Let $y_0\in {\Bbb R}^n,$ $0<r_1<r_2<\infty$ and
\begin{equation}\label{eq1**}
A=A(y_0, r_1,r_2)=\left\{ y\,\in\,{\Bbb R}^n:
r_1<|y-y_0|<r_2\right\}\,.\end{equation}
Given sets $E,$ $F\subset\overline{{\Bbb R}^n}$ and a domain
$D\subset {\Bbb R}^n$ we denote by $\Gamma(E,F,D)$ a family of all
paths $\gamma:[a,b]\rightarrow \overline{{\Bbb R}^n}$ such that
$\gamma(a)\in E,\gamma(b)\in\,F $ and $\gamma(t)\in D$ for $t \in
(a, b).$ If $f:D\rightarrow {\Bbb R}^n,$ $y_0\in {\Bbb R}^n$ and
$0<r_1<r_2<d_0=\sup\limits_{y\in f(D)}|y-y_0|,$ then by
$\Gamma_f(y_0, r_1, r_2)$ we denote the family of all paths $\gamma$
in $D$ such that $f(\gamma)\in \Gamma(S(y_0, r_1), S(y_0, r_2),
A(y_0,r_1,r_2)).$ Let $Q:{\Bbb R}^n\rightarrow [0, \infty]$ be a
Lebesgue measurable function. We say that {\it $f$ satisfies
Poletsky inverse inequality} at the point $y_0\in {\Bbb R}^n$ with
respect to $p$-modulus, if the relation
\begin{equation}\label{eq2*A}
M_p(\Gamma_f(y_0, r_1, r_2))\leqslant
\int\limits_{A(y_0,r_1,r_2)\cap f(D)} Q(y)\cdot \eta^p (|y-y_0|)\,
dm(y)
\end{equation}
holds for any Lebesgue measurable function $\eta:
(r_1,r_2)\rightarrow [0,\infty ]$ such that
\begin{equation}\label{eqA2}
\int\limits_{r_1}^{r_2}\eta(r)\, dr\geqslant 1\,.
\end{equation}
We say that $f$ satisfies Poletsky inverse inequality at the point
$y_0=\infty $ with respect to $p$-modulus, if the
relation~(\ref{eq2*A}) holds for $y_0=0$ with
$\widetilde{Q}(y)=Q\left(\frac{y}{|y|^2}\right).$ The examples of
mappings satisfying relations~(\ref{eq2*A})--(\ref{eqA2}) are
classes of quasiconformal mappings, as well as quasiregular mappings
with finite multiplicity (see e.g. \cite[Definition~13.1]{Va},
\cite[Remark~2.5.II]{Ri}). We set
\begin{equation*}\label{eq12}
q_{y_0}(r)=\frac{1}{\omega_{n-1}r^{n-1}}\int\limits_{S(y_0,
r)}Q(y)\,d\mathcal{H}^{n-1}(y)\,, \end{equation*}
where $\omega_{n-1}$ denotes the area of the unit sphere ${\Bbb
S}^{n-1}$ in ${\Bbb R}^n.$

\medskip
We say that a function ${\varphi}:D\rightarrow{\Bbb R}$ has a {\it
finite mean oscillation} at a point $x_0\in D,$ write $\varphi\in
FMO(x_0),$ if
$$\limsup\limits_{\varepsilon\rightarrow
0}\frac{1}{\Omega_n\varepsilon^n}\int\limits_{B( x_0,\,\varepsilon)}
|{\varphi}(x)-\overline{{\varphi}}_{\varepsilon}|\ dm(x)<\infty\,,
$$
where $\overline{{\varphi}}_{\varepsilon}=\frac{1}
{\Omega_n\varepsilon^n}\int\limits_{B(x_0,\,\varepsilon)}
{\varphi}(x) \,dm(x)$ and $\Omega_n$ denotes the volume of the unit
ball ${\Bbb B}^n$ in ${\Bbb R}^n.$
We also say that a function ${\varphi}:D\rightarrow{\Bbb R}$ has a
finite mean oscillation at $A\subset \overline{D},$ write
${\varphi}\in FMO(A),$ if ${\varphi}$ has a finite mean oscillation
at any point $x_0\in A.$ Let $h$ be a chordal metric in
$\overline{{\Bbb R}^n},$
\begin{equation}\label{eq3C}
h(x,\infty)=\frac{1}{\sqrt{1+{|x|}^2}}\,,\quad
h(x,y)=\frac{|x-y|}{\sqrt{1+{|x|}^2} \sqrt{1+{|y|}^2}}\qquad x\ne
\infty\ne y\,.
\end{equation}
and let $h(E):=\sup\limits_{x,y\in E}\,h(x,y)$ be a chordal diameter
of a set~$E\subset \overline{{\Bbb R}^n}$ (see, e.g.,
\cite[Definition~12.1]{Va}).

Following~\cite[Section~2.4]{NP}, we say that a domain $D\subset
{\Bbb R}^n,$ $n\geqslant 2,$ is {\it uniform with respect to
$p$-modulus}, if for any $r>0$ there is $\delta>0$ such that the
inequality
$$
M_p(\Gamma(F^{\,*},F, D))\geqslant \delta
$$
holds for any continua $F, F^*\subset D$ with $h(F)\geqslant r$ and
$h(F^{\,*})\geqslant r,$ where $h$ is a chordal metric defined
in~(\ref{eq3C}). When $p=n,$ the prefix ``relative to $p $-modulus''
is omitted. Note that, this definition slightly different from the
``classical'' given in \cite[Chapter~2.4]{NP}, where the sets $F$
and $F^*\subset D $ are assumed to be arbitrary connected.

Given $n-1<p\leqslant n,$ $a, b\in D,$ $a\ne b,$ a Lebesgue
measurable function $Q:D\rightarrow [0, \infty]$ and $\delta>0$ we
define the family $\frak{F}^{p, \delta}_{Q, a, b}(D)$ of all
mappings $f:D\rightarrow{\Bbb R}^n$ which satisfy the
relations~(\ref{eq2*A})--(\ref{eqA2}) at every point $y_0\in {\Bbb
R}^n$ and for every $0<r_1<r_2<d_0=\sup\limits_{y\in f(D)}|y-y_0|$
such that $h(f(a), f(b))\geqslant \delta.$ The following statement
holds, cf. \cite[Lemma~1]{ST$_2$}.

\medskip
\begin{theorem}\label{th1} {\it Let $D$ be a domain in ${\Bbb R}^n,$
$n\geqslant 2,$ let $n-1<p\leqslant n,$ let $a, b\in D,$ $a\ne b,$
let $\delta>0$ and let $Q:D\rightarrow [0, \infty]$ be a Lebesgue
measurable function. Assume that the following conditions hold:

\medskip
1) the domain $D$ is $p$-uniform,

2) the family $\frak{F}^{p, \delta}_{Q, a, b}(D)$ is equicontinuous
at the points $a$ and $b,$

\medskip
3) the function $Q$ satisfies at least one of the following
conditions:

\medskip
$3_1)$ $Q\in FMO(\overline{{\Bbb R}^n});$

\medskip
$3_2)$ for any $y_0\in \overline{{\Bbb R}^n}$ there is
$\delta(y_0)>0$ such that
\begin{equation}\label{eq5D}
\int\limits_{\varepsilon}^{\delta(y_0)}
\frac{dt}{t^{\frac{n-1}{p-1}}q_{y_0}^{\frac{1}{p-1}}(t)}<\infty\,,\quad
0<\varepsilon<\varepsilon_0, \quad\int\limits_{0}^{\delta(y_0)}
\frac{dt}{t^{\frac{n-1}{p-1}}q_{y_0}^{\frac{1}{p-1}}(t)}=\infty\,.
\end{equation}

Then the following holds: given $\varepsilon>0$ there is
$\delta_1(\varepsilon)>0$ such that $h(f(C))\geqslant \delta_1$ for
any $f\in \frak{F}^{p, \delta}_{Q, a, b}(D)$ and any continuum
$C\subset D$ with $h(C)\geqslant \varepsilon.$  }
\end{theorem}

\medskip
Let us emphasize that condition~(\ref{eq5D}) is exact, as indicated
by the following statement.

\medskip
\begin{theorem}\label{th3} {\it Given $n-1<p\leqslant n$
and a locally integrable function $Q:D\rightarrow
[0, \infty]$ for which
\begin{equation*}\label{eq5E}
\int\limits_{0}^{\delta(y_0)}
\frac{dt}{t^{\frac{n-1}{p-1}}q_{y_0}^{\frac{1}{p-1}}(t)}<\infty
\end{equation*}
at least at one point $y_0\in D$ and some $\delta(y_0)>0$ there are
$\varepsilon_0>0,$ $a, b\in D,$ $\delta>0,$ a sequence
$f_k:D\rightarrow {\Bbb R}^n,$ $k=1,2,\ldots ,$ of mappings
satisfying the relations (\ref{eq2*A})--(\ref{eqA2}) and a sequence
of continua $C_k$ in $D,$ $k=1,2,\ldots$ such that $h(f_k(a),
f_k(b))\geqslant \delta>0$ and $h(C_k)\geqslant \varepsilon_0$ for
all $k\in {\Bbb N},$ however, $h(f_k(C_k))<\frac{1}{k},$
$k=1,2,\ldots .$ This sequence $C_k$ be be chosen belonging to the
fixed compactum $K$ in~$D.$ The points $a, b $ also may be chosen
such that the family $f_k(x)$ is equicontinuous at $a$ and $b.$ The
domain $D$ may be $p$-uniform, or not. }
\end{theorem}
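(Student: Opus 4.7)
The plan is to construct a single explicit radial mapping $f$ collapsing a fixed ball around $y_0$ (the point furnished by (\ref{eq5E})) to $\{y_0\}$ and equal to the identity outside a larger ball; the constant sequence $f_k\equiv f$ then does the job. Convergence of the integral in (\ref{eq5E}) is precisely what allows the $q_{y_0}$-extremal radial profile $\psi$ (the solution of the Euler--Lagrange ODE for equality in (\ref{eq2*A}) at $y_0$) to satisfy $\psi(\varepsilon_0)=0$ for some $\varepsilon_0>0$, i.e.\ to collapse $B(y_0,\varepsilon_0)$ to the single point $y_0$ while still making (\ref{eq2*A})-(\ref{eqA2}) hold at $y_0$ with the given $Q$.

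\textbf{Construction.} After translation assume $y_0=0\in D$. Fix $\delta_0>0$ with $\overline{B(0,\delta_0)}\subset D$ and $\delta_0<\delta(y_0)$, and write $\alpha=(n-1)/(p-1)$. For a radial map $f(x)=\psi(|x|)\,x/|x|$ the Euler--Lagrange analysis of the right-hand side of (\ref{eq2*A})-(\ref{eqA2}) at $y_0=0$ yields the extremal ODE $\psi^{\prime}(s)=(\psi(s)/s)^{\alpha}\,q_0^{1/(p-1)}(\psi(s))$, which separates to $dH(\psi)=dJ(s)$ with
\begin{equation*}
H(\psi)=\int_0^{\psi}\frac{d\tilde\psi}{\tilde\psi^{\alpha}\,q_0^{1/(p-1)}(\tilde\psi)},\qquad J(s)=\int s^{-\alpha}\,ds.
\end{equation*}
By (\ref{eq5E}) the function $H$ is finite on $(0,\delta_0]$, so one may pick $\varepsilon_0\in(0,\delta_0)$ satisfying $H(\delta_0)=J(\delta_0)-J(\varepsilon_0)$ (the right-hand side ranges over all of $(0,\infty)$ as $\varepsilon_0\searrow 0$); there is then a unique extremal profile $\psi\colon[\varepsilon_0,\delta_0]\to[0,\delta_0]$ with $\psi(\varepsilon_0)=0$ and $\psi(\delta_0)=\delta_0$. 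Define
\begin{equation*}
f(x):=\begin{cases}0,&|x|\leqslant\varepsilon_0,\\ \psi(|x|)\,x/|x|,&\varepsilon_0\leqslant|x|\leqslant\delta_0,\\ x,&|x|\geqslant\delta_0.\end{cases}
\end{equation*}
Then $f$ is continuous on $D$, the identity outside $B(0,\delta_0)$, and collapses $\overline{B(0,\varepsilon_0)}$ to $\{0\}$.

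\textbf{Verification of (\ref{eq2*A})-(\ref{eqA2}) at $y_0=0$.} For radii $0<r_1<r_2<\delta_0$ the preimage $f^{-1}(A(0,r_1,r_2))$ is the annulus $A(0,\psi^{-1}(r_1),\psi^{-1}(r_2))$, and the standard $p$-modulus formula for radial-based path families together with the change of variable $u=\psi(s)$ gives
\begin{equation*}
M_p(\Gamma_f(0,r_1,r_2))=\omega_{n-1}\Bigl(\int_{r_1}^{r_2}\frac{dr}{r^{\alpha}\,q_0^{1/(p-1)}(r)}\Bigr)^{1-p},
\end{equation*}
which by Fubini (reducing the $Q$-integral on the right of (\ref{eq2*A}) to one involving only $q_0$) is exactly the minimum over admissible $\eta$ of that right-hand side. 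For radii with $r_1$ or $r_2$ exceeding $\delta_0$, paths also enter the identity region and the verification is the trivial one for the identity map. Hence (\ref{eq2*A})-(\ref{eqA2}) hold at $y_0=0$ with the given $Q$.

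\textbf{Conclusion.} Pick $a,b\in D$ with $|a|,|b|>\delta_0$ and $a\ne b$: then $f(a)=a$, $f(b)=b$, so $\delta:=h(a,b)>0$ and the constant sequence $f_k\equiv f$ is trivially equicontinuous at $a,b$ (identity there). Put $C_k\equiv C:=\{te_1:0\leqslant t\leqslant\varepsilon_0\}\subset K:=\overline{B(0,\varepsilon_0)}\subset D$ for a fixed unit vector $e_1$; then $\varepsilon_0':=h(C)>0$ and $f_k(C)=\{0\}$, so $h(f_k(C))=0<1/k$ for every $k$. Taking $D$ a ball yields a $p$-uniform example, while slitting $D$ outside $\overline{B(0,\delta_0)}\cup\{a,b\}$ yields a non-uniform one. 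The principal obstacle is the $p$-modulus equality in the extremal annulus: equality in (\ref{eq2*A}) at $y_0$ is achieved only for the profile obeying the Euler--Lagrange ODE, and existence of such a profile reducing $B(0,\varepsilon_0)$ to a point is exactly what the finiteness assumption (\ref{eq5E}) guarantees.
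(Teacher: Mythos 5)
Your construction is a genuinely different route from the paper's. The paper builds a sequence of radial \emph{homeomorphisms} $f_m=g_m^{-1}$ (with $Q$ truncated to $Q_m$ near the origin) which converge uniformly to a map collapsing the ball $B(0,J_0)$ to a point, $J_0>0$ being positive precisely because of~(\ref{eq5E}); each $f_m$ is non-degenerate and the inequality~(\ref{eq2*A}) for it is delegated to cited results, while the degeneration only appears in the limit. You instead take the degenerate limit object itself -- a single continuous map collapsing $\overline{B(0,\varepsilon_0)}$ to $\{0\}$ -- and verify~(\ref{eq2*A}) for it directly via the extremal radial profile $H(\psi(s))=J(s)-J(\varepsilon_0)$. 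The core computation (the annulus $p$-modulus versus the Hölder-extremal value $\omega_{n-1}\bigl(\int r^{-\alpha}q_0^{-1/(p-1)}\,dr\bigr)^{1-p}$, and the fact that finiteness of $H$ at $0$ is exactly what allows $\psi(\varepsilon_0)=0$) is the same as the paper's, and your version is arguably more economical: it avoids the uniform-convergence step and the truncation $Q_m$, at the cost of having to check the inequality for a non-injective map.

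There is, however, one genuine gap: the gluing to the identity outside $B(0,\delta_0)$ and the claim that for radii with $r_2>\delta_0$ ``the verification is the trivial one for the identity map.'' The identity does \emph{not} automatically satisfy~(\ref{eq2*A}) with the prescribed $Q$: for an annulus $A(0,r_1,r_2)$ with $r_2>\delta_0$ your family $\Gamma_f(0,r_1,r_2)$ connects $S(0,\psi^{-1}(r_1))$ to $S(0,r_2)$, and the required inequality reduces to $\int_{\delta_0}^{r_2}t^{-\alpha}\,dt\geqslant\int_{\delta_0}^{r_2}t^{-\alpha}q_0^{-1/(p-1)}(t)\,dt$, which forces $q_0\geqslant 1$ (in the averaged sense) on $(\delta_0,r_2)$ -- not guaranteed by local integrability of $Q$ alone; the same problem recurs at base points $y_0\ne 0$ in the identity region if membership in the class is meant pointwise in $y_0$. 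The repair is easy and brings you back in line with the paper: drop the identity gluing and use the extremal profile on the whole punctured domain (take $D={\Bbb B}^n$, $\psi=\rho^{-1}$ on $(\varepsilon_0,1)$ with $\psi(1^-)=1$, exactly the paper's limit map $f$), so that every annulus $0<r_1<r_2<d_0$ at the origin is of the extremal type; the points $a,b$ can then be chosen in the outer ring with $|a|\ne|b|$, giving $f(a)\ne f(b)$ and hence $\delta:=h(f(a),f(b))>0$, and continuity of the single map $f$ at $a,b$ gives the required equicontinuity of the constant sequence. With that modification your argument is sound.
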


We should note that, if continua $C$ from Theorem~\ref{th1} is
itself contained in a fixed compactum $K$ inside $D,$ then no
conditions on the geometry of the boundary of the domain $D$ are
required at all. This assertion is contained in the following
theorem.

\medskip
\begin{theorem}\label{th2} {\it Let $D$ be a domain in ${\Bbb R}^n,$
$n\geqslant 2,$ let $n-1<p\leqslant n,$ let $a, b\in D,$ $a\ne b,$
let $\delta>0$ and let $Q:D\rightarrow [0, \infty]$ be a Lebesgue
measurable function. Let $K$ be a non-degenerate compactum inside
$D.$ Assume that the following conditions hold:

\medskip
1) the family $\frak{F}^{p, \delta}_{Q, a, b}(D)$ is equicontinuous
at the points $a$ and $b,$

\medskip
2) the function $Q$ satisfies at least one of the following
conditions:

\medskip
$2_1)$ $Q\in FMO(\overline{{\Bbb R}^n});$

\medskip
$2_2)$ for any $y_0\in \overline{{\Bbb R}^n}$ there is
$\delta(y_0)>0$ such that
\begin{equation*}\label{eq5D_A}
\int\limits_{\varepsilon}^{\delta(y_0)}
\frac{dt}{t^{\frac{n-1}{p-1}}q_{y_0}^{\frac{1}{p-1}}(t)}<\infty\,,\quad
0<\varepsilon<\varepsilon_0, \quad\int\limits_{0}^{\delta(y_0)}
\frac{dt}{t^{\frac{n-1}{p-1}}q_{y_0}^{\frac{1}{p-1}}(t)}=\infty\,.
\end{equation*}

Then the following holds: given $\varepsilon>0$ there is
$\delta_1(\varepsilon)>0$ such that $h(f(C))\geqslant \delta_1$ for
any $f\in \frak{F}^{p, \delta}_{Q, a, b}(D)$ and any continuum
$C\subset K$ with $h(C)\geqslant \varepsilon.$  }
\end{theorem}

\medskip
\begin{remark}
In Theorems~\ref{th1} and~\ref{th2}, the mappings are neither open
nor discrete. The class of mappings $\frak{F}^{p, \delta}_{Q, a,
b}(D)$ involved in these theorems is not assumed to be a normal
family of mappings, although it is assumed to be equicontinuous at
only two given points $a$ and $b.$ There are also no conditions
implying the possibility of a continuous extension of these mappings
to the boundary of the domain $D.$ This remark may be important in
the context of some other statements on this topic; see, for
example, \cite{Cr$_1$}, \cite{Cr$_2$}, \cite{SevSkv} and
\cite{ST$_2$}.
\end{remark}

\section{The main Lemma}

\begin{lemma}\label{lem1}
{\it\, The statement of Theorem~\ref{th1} is true if under the
conditions of this theorem condition~3) is replaced by the following
condition: for any $y_0\in\overline{{\Bbb R}^n}$ there is a Lebesgue
measurable function $\psi:(0, \varepsilon_0)\rightarrow (0,\infty)$
such that
\begin{equation*}\label{eq3.7.1}
0<I(\varepsilon,
\varepsilon_0):=\int\limits_{\varepsilon}^{\varepsilon_0}\psi(t)\,dt
< \infty\,,
\end{equation*}
while there exists a function $\alpha=\alpha(\varepsilon,
\varepsilon_0)\geqslant 0$ such that
\begin{equation} \label{eq3.7.2}
\int\limits_{A(y_0, \varepsilon, \varepsilon_0)}
Q(y)\cdot\psi^{\,p}(|y-y_0|)\,dm(y)= \alpha(\varepsilon,
\varepsilon_0)\cdot I^p(\varepsilon, \varepsilon_0)\,,\end{equation}
where $A(y_0, \varepsilon, \varepsilon_0)$ is defined in
(\ref{eq1**}).}
\end{lemma}

\medskip
Here the conditions mentioned above for $y_0=\infty$ must be
understood as conditions for the function
$\widetilde{Q}(y):=Q(y/|y|^2)$ at the origin and for the
corresponding mapping $\widetilde{f}:=\psi_1\circ f$ instead of~$f,$
where $\psi_1(y)=\frac{y}{|y|^2}.$

\medskip
\begin{proof}
Let us prove the statement of the lemma by contradiction, i.e.,
assume that there exists $\varepsilon_1>0$ such that for every $k\in
{\Bbb N}$ there is a continuum $C_k\subset D$ and a mapping $f_k\in
\frak{F}^{p, \delta}_{Q, a, b}(D)$ such that $h(C_k)\geqslant
\varepsilon_1,$ however, $h(f_k(C_k))<1/k.$ Let us join the points
$a$ and $b$ with a path $\gamma:[0, 1]\rightarrow D,$ $\gamma(0)=a,$
$\gamma(1)=b,$ in~$D.$ It follows from the conditions of the lemma
that $h(f_k(\gamma))\geqslant \delta$ for any $k=1,2,\ldots .$ Due
to the compactness of $\overline{{\Bbb R}^n},$ we may assume that
$y_k\rightarrow y_0\in\overline{{\Bbb R}^n}$ as $k\rightarrow\infty$
for some sequence $y_k\in f_k(C_k)$ and some $y_0\in \overline{{\Bbb
R}^n}.$ Let us firstly consider the case when $y_0\ne\infty.$ Since
$h(f_k(C_k))<1/k$ for any $k=1,2,\ldots ,$ there exists $k_0\in
{\Bbb N}$ such that $f_k(C_k)\subset B(y_0, \delta/3).$ Now,
$d(f_k(\gamma))\geqslant h(f_k(\gamma))\geqslant \delta,$
$k=1,2,\ldots ,$ and, moreover, the points $f_k(a)$ and $f_k(b)$ may
not belong to $B(y_0, \delta)$ simultaneously. Let us to consider
two possibilities:

\medskip
1) Exactly one of the points $f_k(a)$ or $f_k(b)$ belongs to the
ball $B_h(y_0, \delta/3),$ where $B_h(y_0,
\delta/3)=\{z\in\overline{{\Bbb R}^n}: h(z, y_0)<\delta/3\}.$ We may
consider that $f_k(a)\in B_h(y_0, \delta/3)$ and $f_k(b)\not\in
B_h(y_0, \delta/3).$ By \cite[Theorem~1.I.5, \S46]{Ku}
$|f_k(\gamma)|\cap  S_h(y_0, \delta/3)\ne\varnothing,$ where
$S_h(y_0, \delta/3)=\{z\in\overline{{\Bbb R}^n}: h(z,
y_0)=\delta/3\}.$ Let $t_k=\sup\limits_{t\in [0, 1]:
f_k(\gamma(t))\in S_h(y_0, \delta/3)}t.$ Now, the path
$f_k(\gamma)|_{[t_k, 1]}$ belongs to ${\Bbb R}^n\setminus B_h(y_0,
\delta/3).$ By the triangle inequality,
\begin{gather}
\delta\leqslant h(f_k(\gamma(0)),f_k(\gamma(1))) \leqslant
h(f_k(\gamma(0)), f_k(\gamma(t_k)))\nonumber\\
\label{eq2A} +h(f_k(\gamma(t_k)), f_k(\gamma(1)))\leqslant
\frac{\delta}{3}+h(f_k(\gamma(t_k)), f_k(\gamma(1)))\,.
\end{gather}
It follows from~(\ref{eq2A}) that
\begin{equation}\label{eq1}
h(f_k(\gamma(t_k)),f_k(\gamma(1)))\geqslant \frac{2\delta}{3}
\end{equation}
for all respective $k\in {\Bbb N}.$

We set $D_k:=\bigl|f_k(\gamma)|_{[t_k, 1]}\bigr|.$ Observe that
$D_k\subset {\Bbb R}^n\setminus B_h(y_0, \delta/3)$ by the
construction. We also set $D^{\,*}_k=\bigl|\gamma|_{[t_k,
1]}\bigr|.$ Observe that, there exists $\delta_*>0$ such that
$h(D^{\,*}_k)\geqslant \delta_*$ for every $k\in {\Bbb N}.$
Otherwise, $h(D^{\,*}_{k_l})\rightarrow 0$ as $l\rightarrow \infty,$
but this contradicts with the equicontinuity of the family $f_k$ at
the point $b,$ because $\gamma(t_{k_l})\rightarrow b$ as
$l\rightarrow\infty$ and simultaneously $h(f_{k_l}(\gamma(t_{k_l}),
f_{k_l}(1)))=h(D_{k_l})\geqslant \frac{2\delta}{3}$ for infinitely
many $l$ by~(\ref{eq1}).

\medskip
2) Exactly two points $f_k(a)$ or $f_k(b)$ lye outside the ball
$B_h(y_0, \delta/3).$ If $\bigl|f_k(\gamma)\bigr|\subset {\Bbb
R}^n\setminus B_h(y_0, \delta/3),$ we set
$D_k:=\bigl|f_k(\gamma)\bigr|$ and $D^{\,*}_k=\bigl|\gamma\bigr|.$
Now, $h(D^{\,*}_k)\geqslant \delta_*$ for every $k\in {\Bbb N}$ and
some $\delta_*>0,$ moreover, $D_k\subset {\Bbb R}^n\setminus
B_h(y_0, \delta/3).$

\medskip
Otherwise, if $|f_k(\gamma)|\cap ({\Bbb R}^n\setminus B_h(y_0,
\delta/3))\ne\varnothing \ne |f_k(\gamma)|\cap B_h(y_0, \delta/3)),$
we set $$t^{\,*}_k=\inf\limits_{t\in [0, 1]: f_k(\gamma(t))\in {\Bbb
R}^n\setminus B_h(y_0, \delta/3)}t\,,\qquad t_k=\sup\limits_{t\in
[0, 1]: f_k(\gamma(t))\in B_h(y_0, \delta/3)}t.$$ Now, the paths
$\widetilde{\gamma_k^{(1)}}=f_k(\gamma)|_{[0, t^{\,*}_k]}$ and
$\widetilde{\gamma_k^{(2)}}=f_k(\gamma)|_{[t_k, 1]}$ belongs to
${\Bbb R}^n\setminus B_h(y_0, \delta/3).$ Arguing similarly
to~(\ref{eq2A}) we conclude that at least one of the paths
$\widetilde{\gamma_k^{(1)}}$ or $\widetilde{\gamma_k^{(2)}}$ has a
chordal diameter not less than $\delta/3.$ Let $D_k$ be locus of
this path, and let $D^{\,*}_k$ be the locus of its corresponding
``pre-image'' in $D.$ Arguing similarly to the above case~1) we
obtain that $D_k\subset {\Bbb R}^n\setminus B_h(y_0, \delta/3)$ and
$h(D^{\,*}_k)\geqslant \delta_*$ for every $k\in {\Bbb N}$ and some
$\delta_*>0,$ as well.

\medskip
Thus, in both cases 1) or 2) we have that $D_k\subset {\Bbb
R}^n\setminus B_h(y_0, \delta/3)$ and $h(D^{\,*}_k)\geqslant
\delta_*$ for every $k\in {\Bbb N}$ and some $\delta_*>0.$ Setting
$\Gamma_k=\Gamma (C_k, D^{\,*}_k, D)$ by the $p$-uniformity of the
domain $D$ we obtain that
\begin{equation}\label{eq1A}
M_p(\Gamma_k)\geqslant \delta_1>0
\end{equation}
for any $k=1,2,\ldots $ and some $\delta_1>0.$ Let us consider
$\varepsilon_0$ from the conditions of the lemma. Reducing it, if
necessary, we may consider that $B(y_0, \varepsilon_0)\subset
B_h(y_0, \delta/3).$

Since $h(f_k(C_k))<1/k,$ we may assume that
\begin{equation*}\label{eq3B}
f_{k}(C_{k})\subset B(y_0, 1/k)\,, k=1,2,\ldots\,.
\end{equation*}
Let $k_0\in {\Bbb N}$ be such that $B(y_0, 1/k)\subset B(y_0,
\varepsilon_0)$ for all $k\geqslant k_0.$
In this case, observe that
\begin{equation}\label{eq3G}
f_k(\Gamma_k)>\Gamma(S(y_0, 1/k), S(y_0, \varepsilon_0), A(y_0,
1/k,\varepsilon_0))\,.
\end{equation}
Indeed, let $\widetilde{\gamma}\in f_k(\Gamma_k).$ Then
$\widetilde{\gamma}(t)=f_{k}(\Delta(t)),$ where $\Delta\in
\Gamma_k,$ $\Delta:[0, 1]\rightarrow D,$ $\Delta(0)\in C_k,$
$\gamma(1)\in D^{\,*}_k.$ Since $f_k(D^{\,*}_k)=D_k\subset {\Bbb
R}^n\setminus B_h(y_0, \delta/3)$ and $B(y_0, \varepsilon_0)\subset
B_h(y_0, \delta/3)$ by the construction,$|\widetilde{\gamma}|\cap
B(y_0, \varepsilon_0)\ne\varnothing \ne |\widetilde{\gamma}|\cap
({\Bbb R}^n\setminus B(y_0, \varepsilon_0)).$ Then by
\cite[Theorem~1.I.5, \S46]{Ku} there exists $0<t_1<1$ such that
$f_k(\Delta(t_1))\in S(y_0, 1/k).$ Setting $\Delta_1:=\Delta|_{[t_1,
1]},$ we may assume that $f_k(\Delta(t))\not\in B(y_0, 1/k)$ for
every $t\geqslant t_1.$ Arguing similarly, we obtain a point $t_2\in
(t_1, 1]$ such that $f_k(\Delta(t_2))\in S(y_0, \varepsilon_0).$
Setting $\Delta_2:=\Delta|_{[t_1, t_2]},$ we may assume that
$f_k(\Delta(t))\in B(y_0, \varepsilon_0)$ for all $t\in [t_1, t_2].$
Then the path $f_k(\Delta_2)$ is a subpath of
$f_k(\Delta)=\widetilde{\gamma}$ which belongs to $\Gamma(S(y_0,
1/k), S(y_0, \varepsilon_0), A(y_0, 1/k,\varepsilon_0)).$ The
relation~(\ref{eq3G}) is established.

\medskip
It follows from~(\ref{eq3G}) that
\begin{equation}\label{eq3H}
\Gamma_k>\Gamma_{f_k}(y_0, 1/k, \varepsilon_0)\,.
\end{equation}
We set $$\eta_{k}(t)=\left\{
\begin{array}{rr}
\psi(t)/I(1/k, \varepsilon_0), & t\in (1/k, \varepsilon_0)\,,\\
0,  &  t\not\in (1/k, \varepsilon_0)\,,
\end{array}
\right. $$
where $I(1/k, \varepsilon_0)=\int\limits_{1/k}^{\varepsilon_0}\,\psi
(t)\, dt.$ Observe that
$\int\limits_{1/k}^{\varepsilon_0}\eta_{k}(t)\,dt=1.$ Then by the
relations~(\ref{eq3.7.2}) and~(\ref{eq3H}), due to the definition of
$f_k$ in~(\ref{eq2*A}) we obtain that
\begin{gather*}
M_p(\Gamma_k)\leqslant M_p(\Gamma_{f_k}(y_0, 1/k, \varepsilon_0))\\
\leqslant \frac{1}{I^p(1/k, \varepsilon_0)}\int\limits_{A(y_0, 1/k,
\varepsilon_0)} Q(y)\cdot\psi^{\,p}(|y-y_0|)\,dm(y)\rightarrow
0\quad \text{as}\quad k\rightarrow\infty\,. \end{gather*}
The latter contradicts with~(\ref{eq1A}). Lemma is proved for the
case $y_0\ne\infty.$

Let us consider the case $y_0=\infty.$ Applying the inversion
$\psi(y)=\frac{y}{|y|^2},$ we consider the family of mappings
$\widetilde{f}_k:=\psi\circ f_k.$ Now, due to the compactness of
$\overline{{\Bbb R}^n},$ we may assume that $y_k\rightarrow
0\in\overline{{\Bbb R}^n}$ as $k\rightarrow\infty$ for some sequence
$y_k\in \widetilde{f}_k(C_k)$ while $\widetilde{f}_k$ satisfy the
relation~(\ref{eq2*A}) for $y_0=0$ with
$\widetilde{Q}(y)=Q\left(\frac{y}{|y|^2}\right).$ Then the proof of
the lemma, starting from the second paragraph for mappings
$\widetilde{f}_k,$ $k=1,2,\ldots ,$ is completely similar to the
case $y_0\ne\infty.$~$\Box$
\end{proof}

The following statement may be found in~\cite[Lemmas~1.3 and
1.4]{Sev$_2$}.

\medskip
\begin{proposition}\label{pr6}
{\it\, Let $Q:{\Bbb R}^n\rightarrow [0,\infty],$ $n\geqslant 2,$
$n-1<p\leqslant n,$ be a Lebesgue measurable function and let
$x_0\in {\Bbb R}^n.$ Assume that either of the following conditions
holds

\noindent (a) $Q\in FMO(x_0),$

\noindent (b)
$q_{x_0}(r)\,=\,O\left(\left[\log{\frac1r}\right]^{n-1}\right)$ as
$r\rightarrow 0,$

\noindent (c) for some small $\delta_0=\delta_0(x_0)>0$ we have the
relations
$$
\int\limits_{\delta}^{\delta_0}\frac{dt}{t^{\frac{n-1}{p-1}}q_{x_0}^{\frac{1}{p-1}}(t)}<\infty,\qquad
0<\delta<\delta_0,
$$
and
$$
\int\limits_{0}^{\delta_0}\frac{dt}{t^{\frac{n-1}{p-1}}q_{x_0}^{\frac{1}{p-1}}(t)}=\infty\,.
$$
Then  there exist a number $\varepsilon_0\in(0,1)$ and a function
$\psi:(0, \varepsilon_0)\rightarrow [0, \infty)$ such that the
relation
$$
\int\limits_{\varepsilon<|x-x_0|<\varepsilon_0}Q(x)\cdot\psi^p(|x-x_0|)
\ dm(x)=o(I^p(\varepsilon, \varepsilon_0))\,,
$$
holds as $\varepsilon\rightarrow 0,$
where
$$
0<I(\varepsilon, \varepsilon_0)
=\int\limits_{\varepsilon}^{\varepsilon_0}\psi(t)\,dt < \infty
\qquad \forall\quad\varepsilon \in(0, \varepsilon_1)
$$
for some $0<\varepsilon_1<\varepsilon_0.$ }
\end{proposition}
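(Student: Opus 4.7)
The plan is to construct an explicit weight $\psi$ separately in each of the three listed cases, converting the target estimate to a one-dimensional integral via spherical coordinates, and then reducing the verification to a single exponent identity together with a monotone asymptotic on $I(\varepsilon,\varepsilon_0)$.

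First I would handle case (c), which is the simplest and most transparent. Define
$$
\psi(t)\,:=\,\frac{1}{t^{(n-1)/(p-1)}\,q_{x_0}^{\,1/(p-1)}(t)}\,,\qquad t\in(0,\varepsilon_0)\,.
$$
The first line of (c) guarantees $0<I(\varepsilon,\varepsilon_0)<\infty$ for $\varepsilon\in(0,\varepsilon_1)$ with some $\varepsilon_1<\varepsilon_0$, while the second gives $I(\varepsilon,\varepsilon_0)\to\infty$ as $\varepsilon\to 0^+$. Passing to spherical coordinates and applying the definition of $q_{x_0}$ produces
$$
\int\limits_{\varepsilon<|y-x_0|<\varepsilon_0}Q(y)\,\psi^{\,p}(|y-x_0|)\,dm(y)\,=\,\omega_{n-1}\int\limits_{\varepsilon}^{\varepsilon_0} q_{x_0}(r)\,r^{\,n-1}\,\psi^{\,p}(r)\,dr\,,
$$
and the exponent identities $p(n-1)/(p-1)-(n-1)=(n-1)/(p-1)$ together with $p/(p-1)-1=1/(p-1)$ collapse $q_{x_0}(r)\,r^{n-1}\,\psi^{\,p}(r)$ to $\psi(r)$ itself. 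Hence the left-hand side equals $\omega_{n-1}\,I(\varepsilon,\varepsilon_0)$, and dividing by $I^{\,p}(\varepsilon,\varepsilon_0)$ yields $\omega_{n-1}/I^{\,p-1}(\varepsilon,\varepsilon_0)\to 0$, which is precisely the $o(I^{\,p})$ bound claimed.

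Next I would treat case (b) using the alternative $\psi(t):=1/(t\log(1/t))$ on $(0,\varepsilon_0)$ with $\varepsilon_0<1/e$, so that $I(\varepsilon,\varepsilon_0)=\log\log(1/\varepsilon)-\log\log(1/\varepsilon_0)\to\infty$. Inserting $q_{x_0}(r)\le C\log^{n-1}(1/r)$ into the spherical integral yields
$$
\int Q\,\psi^{\,p}\,dm\,\le\,C\omega_{n-1}\int\limits_{\varepsilon}^{\varepsilon_0}\frac{r^{\,n-1-p}}{\log^{\,p-n+1}(1/r)}\,dr\,.
$$
For $p=n$ the right-hand side is a constant multiple of $I(\varepsilon,\varepsilon_0)$, and for $n-1<p<n$ the integrand is integrable at $0$ (since $n-1-p>-1$), so the right-hand side stays bounded as $\varepsilon\to 0$. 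In either sub-case the ratio to $I^{\,p}$ vanishes. For case (a), the classical $FMO$-to-spherical-mean reduction (see \cite{MRSY} or the opening of \cite[Lemma~1.3]{Sev$_2$}) extracts from $Q\in FMO(x_0)$ precisely the logarithmic spherical-mean control needed to repeat the case (b) argument with the same $\psi$.

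I expect the main obstacle to be this last step, namely the passage from the $FMO$ hypothesis to a usable bound on $q_{x_0}$ (or on $\int Q\psi^{\,p}\,dm$ directly): it is the only piece that is not routine bookkeeping, relying on a Calder\'on--Zygmund type estimate for mean oscillation and the dyadic-annulus summation underlying the standard $FMO$ lemma. Once that step is in hand, everything else is exponent manipulation; in particular, the elegance of case (c) is that $\psi$ is chosen precisely so that $q_{x_0}(r)\,r^{n-1}\,\psi^{\,p}(r)$ reduces identically to $\psi(r)$, and the conclusion becomes essentially tautological.
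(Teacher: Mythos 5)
The paper does not actually prove this proposition; it imports it verbatim from \cite[Lemma~1.3]{Sev$_2$}, so your attempt must be judged against the standard argument there. Your cases (b) and (c) are correct and are exactly the canonical constructions: in (c) the choice $\psi(t)=t^{-(n-1)/(p-1)}q_{x_0}^{-1/(p-1)}(t)$ together with Fubini in polar coordinates collapses $\omega_{n-1}r^{n-1}q_{x_0}(r)\psi^p(r)$ to $\omega_{n-1}\psi(r)$, giving $\alpha=\omega_{n-1}/I^{p-1}\to 0$ since $I\to\infty$ by the divergence hypothesis; in (b) the choice $\psi(t)=1/(t\log(1/t))$ with the exponent split between $p=n$ and $n-1<p<n$ is also right (modulo the usual convention $\psi:=0$ where $q_{x_0}=\infty$ so that $\psi$ stays finite-valued).

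The one genuine imprecision is your treatment of case (a). The condition $Q\in FMO(x_0)$ does \emph{not} yield "logarithmic spherical-mean control," i.e.\ it does not imply $q_{x_0}(r)=O(\log^{n-1}(1/r))$: finite mean oscillation controls averages over solid balls, and the spherical means $q_{x_0}(r)$ can blow up arbitrarily fast on a null set of radii without violating $FMO$. So you cannot "repeat the case (b) argument," which runs through a pointwise bound on $q_{x_0}$. The correct route is the solid-annulus $FMO$ lemma (\cite[Corollary~6.3]{MRSY} or its analogue in \cite{Sev$_2$}): for $\varphi\in FMO(x_0)$ one has
$$
\int\limits_{\varepsilon<|x-x_0|<\varepsilon_0}\frac{\varphi(x)\,dm(x)}{\bigl(|x-x_0|\log\frac{1}{|x-x_0|}\bigr)^{n}}\;=\;O\Bigl(\log\log\frac{1}{\varepsilon}\Bigr)\,,
$$
which with the same $\psi(t)=1/(t\log(1/t))$ directly gives $\int Q\psi^n\,dm=O(I)=o(I^n)$ for $p=n$; for $n-1<p<n$ one additionally observes that $\psi^p(t)=(t\log(1/t))^{n-p}\psi^n(t)\leqslant C\,\psi^n(t)$ on $(0,\varepsilon_0)$ with $\varepsilon_0<1/e$, since $t\log(1/t)$ is bounded there, and then $o(I^p)$ follows because $p>1$. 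You correctly flag this step as the only non-routine one and point at the right sources, but the mechanism you describe (extraction of a spherical-mean bound) is not the one that works; the repair above is short and standard.
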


\medskip
{\it Proof of Theorem~\ref{th1}} immediately follows by
Lemma~\ref{lem1} and Proposition~\ref{pr6}.~$\Box$

\section{Analogs of N\"{a}kki theorems for $p$-modulus}

Let $E_0,$ $E_1$ be sets in $D\subset {\Bbb R}^n$. The following
estimate holds (see \cite[Theorem~4]{Car}).

\begin{proposition}\label{pr4}
{\it Let $A(0, a, b)=\{a<|x|<b\}$ be a ring containing in $D\subset
{\Bbb R}^n$ such that $S(0, r)$ intersects $E_0$ and $E_1$ for any
$r\in (a,b)$ where $E_0\cap E_1=\varnothing.$ Then for any
$p\in(n-1, n)$
\begin{equation*}
M_p(\Gamma(E_0, E_1, D))\geqslant
\frac{2^nb_{n,p}}{n-p}(b^{n-p}-a^{n-p})\,,
\end{equation*}
where $b_{n,p}$ is a constant depending only $n$ and $p$.}
\end{proposition}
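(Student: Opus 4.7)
The plan is to establish the lower bound by testing an arbitrary admissible metric $\rho\in\mathrm{adm}\,\Gamma(E_0,E_1,D)$ against a controlled family of paths, namely short great-circle arcs on each sphere $S(0,r)$ with $r\in(a,b)$, and then integrating the resulting one-dimensional estimate over the ring.

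First, fix $r\in(a,b)$. By hypothesis we may choose $x_0(r)\in E_0\cap S(0,r)$ and $x_1(r)\in E_1\cap S(0,r)$; since $S(0,r)\subset A(0,a,b)\subset D$, the shorter great-circle arc $\gamma_r\subset S(0,r)$ joining them lies entirely in $D$, has length $\ell_r\leqslant\pi r$, and belongs to $\Gamma(E_0,E_1,D)$. Admissibility of $\rho$ gives $\int_{\gamma_r}\rho\,|dx|\geqslant 1$, and H\"older's inequality with exponents $p$ and $p/(p-1)$ applied to $\int_{\gamma_r}\rho\cdot 1\,|dx|$ yields
$$
\int_{\gamma_r}\rho^{\,p}(x)\,|dx|\ \geqslant\ \frac{1}{\ell_r^{\,p-1}}\ \geqslant\ \frac{1}{(\pi r)^{p-1}}.
$$

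To upgrade this one-dimensional bound on a single arc to an $(n-1)$-dimensional bound on the whole sphere, I would employ a spherical-symmetrization argument, equivalently an integral-geometric averaging of the H\"older estimate over all great-circle arcs joining $E_0\cap S(0,r)$ to $E_1\cap S(0,r)$: replacing these sets by two opposite spherical caps of the same $\mathcal{H}^{n-1}$-measure does not increase the associated $p$-modulus on the sphere, and for the symmetric configuration the Euler--Lagrange extremal $\rho(\theta)\propto(\sin\theta)^{-(n-2)/(p-1)}$ in spherical polar coordinates about a pole gives a direct bound
$$
\int_{S(0,r)}\rho^{\,p}\,d\mathcal{H}^{n-1}\ \geqslant\ c(n,p)\cdot r^{\,n-1-p}
$$
for a dimensional constant $c(n,p)>0$; the spherical integral $\int_0^\pi (\sin\theta)^{-(n-2)/(p-1)}\sin^{n-2}\theta\,d\theta$ converges precisely when $p>n-1$, which explains the lower restriction on $p$ in the hypothesis. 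Slicing the annulus by spheres and applying Fubini,
$$
\int_{{\Bbb R}^n}\rho^{\,p}(x)\,dm(x)\ \geqslant\ \int_a^b\int_{S(0,r)}\rho^{\,p}\,d\mathcal{H}^{n-1}\,dr\ \geqslant\ c(n,p)\int_a^b r^{\,n-1-p}\,dr\ =\ \frac{c(n,p)}{n-p}\bigl(b^{n-p}-a^{n-p}\bigr),
$$
and taking the infimum over $\rho$ yields the claim with $b_{n,p}=c(n,p)/2^n$.

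The main obstacle is the passage from the single-arc H\"older estimate to the full spherical estimate, i.e.\ rigorously justifying the spherical symmetrization (or integral-geometric averaging) for $p$-modulus at a non-conformal exponent $p\neq n$, where the conformal invariance that simplifies the case $p=n$ is absent. Once this is established, an explicit computation via the extremal $\rho\propto(\sin\theta)^{-(n-2)/(p-1)}$ supplies the precise constant $b_{n,p}$; the range $n-1<p<n$ is exactly the regime in which both the spherical integral converges and the radial integral $\int_a^b r^{n-1-p}\,dr$ produces the power expression $(b^{n-p}-a^{n-p})/(n-p)$, rather than the logarithmic formula $\log(b/a)$ appearing at the borderline $p=n$.
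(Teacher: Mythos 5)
First, note that the paper does not prove this statement at all: Proposition~\ref{pr4} is quoted verbatim from Caraman \cite[Theorem~4]{Car94}, so there is no internal proof to compare against; your attempt has to stand on its own.

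Your overall strategy --- slice the ring by the spheres $S(0,r)$, prove a per-sphere estimate $\int_{S(0,r)}\rho^{\,p}\,d\mathcal{H}^{n-1}\geqslant c(n,p)\,r^{\,n-1-p}$ for every $\rho\in{\rm adm}\,\Gamma(E_0,E_1,D)$, and integrate in $r$ by Fubini --- is the standard route to such bounds, and your accounting of why $n-1<p<n$ is the right range (convergence of $\int_0^\pi(\sin\theta)^{-(n-2)/(p-1)}\,d\theta$ on one side, the power $r^{n-1-p}$ integrating to $(b^{n-p}-a^{n-p})/(n-p)$ on the other) is correct. The genuine gap is exactly the step you flag yourself: the per-sphere estimate is never proved. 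The single-arc H\"older bound $\int_{\gamma_r}\rho^{\,p}\,|dx|\geqslant(\pi r)^{1-p}$ gives no information about $\int_{S(0,r)}\rho^{\,p}\,d\mathcal{H}^{n-1}$, since one arc is $\mathcal{H}^{n-1}$-null, and the symmetrization you invoke to bridge this is both unjustified and ill-suited: replacing $E_i\cap S(0,r)$ by ``spherical caps of the same $\mathcal{H}^{n-1}$-measure'' degenerates when these intersections are single points (which the hypotheses allow), and the monotonicity of $p$-modulus under spherical symmetrization for $p\neq n$ is itself a nontrivial claim that would need proof.

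What actually closes the gap is more elementary and bypasses symmetrization entirely: fix $x_0(r)\in E_0\cap S(0,r)$ and $x_1(r)\in E_1\cap S(0,r)$ and consider the \emph{whole family} of circular arcs on $S(0,r)$ through these two points (an $(n-2)$-parameter family foliating the sphere up to a null set). Every such arc lies in $\Gamma(E_0,E_1,D)$, so $\rho$ is admissible on each; applying H\"older on each arc and integrating over the family with the explicit Jacobian of the foliation yields $\int_{S(0,r)}\rho^{\,p}\,d\mathcal{H}^{n-1}\geqslant c(n,p)\,|x_0(r)-x_1(r)|^{\,n-1-p}\geqslant c(n,p)(2r)^{n-1-p}$, the Jacobian singularity at the two endpoints being integrable against the H\"older weight precisely when $p>n-1$. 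This is the two-point $p$-modulus estimate (in the spirit of Proposition~\ref{pr1} of the paper), and it is the ingredient your write-up asserts but does not supply; there is also a minor measurable-selection point for $r\mapsto(x_0(r),x_1(r))$ needed before the Fubini step. As written, the proposal is an accurate plan with the decisive lemma left open.
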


\medskip
The following result is true, see \cite[Theorem~1.1]{SKNI},
cf.~\cite[Lemma~1.15]{Na$_1$}.

\begin{proposition}\label{pr1}
{\it Let $D$ be a domain in ${\Bbb R}^n,$ $n\geqslant 2,$ and let
$p>n-1.$ If $A$ and $A^{\,*}$ are (nondegenerate) continua in $D,$
then $M_p(\Gamma(A, A^{\,*}, D))>0.$}
\end{proposition}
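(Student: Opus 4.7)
The plan is to deduce the positivity of $M_p(\Gamma(A, A^*, D))$ from the explicit annular bound of Proposition~\ref{pr4}. The goal is to exhibit a spherical ring $A(x_0, r_1, r_2) \subset D$ such that every sphere $S(x_0, r)$, $r \in (r_1, r_2)$, meets both $A$ and $A^*$; then Proposition~\ref{pr4} applied with $E_0 = A$, $E_1 = A^*$ immediately gives
$$M_p(\Gamma(A, A^*, D)) \;\geqslant\; \frac{2^n b_{n,p}}{n-p}\bigl(r_2^{n-p} - r_1^{n-p}\bigr) \;>\; 0$$
in the range $n-1 < p < n$, while the boundary case $p = n$ reduces to the classical N\"akki lemma \cite[Lemma~1.15]{Na$_1$}.

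If $A \cap A^* \neq \varnothing$ there is nothing to prove: any constant path at a common point lies in $\Gamma(A, A^*, D)$ and admits no admissible metric, so $M_p = +\infty > 0$. Assume $A \cap A^* = \varnothing$ and put $d := \mathrm{dist}(A, A^*) > 0$. If $\operatorname{diam}(A) > 2d$ (or symmetrically $\operatorname{diam}(A^*) > 2d$), pick $x_0 \in A$ realizing $\mathrm{dist}(x_0, A^*) = d$; the triangle inequality gives $\rho_A := \max_{a \in A} |a - x_0| \geqslant \operatorname{diam}(A)/2 > d$, while connectedness of both continua forces $\{|a - x_0| : a \in A\}$ and $\{|a^* - x_0| : a^* \in A^*\}$ to be intervals $[0, \rho_A]$ and $[d, \rho_{A^*}]$ with non-trivial overlap above $d$. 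Choosing $r_1, r_2$ inside this overlap and with $r_2 < \mathrm{dist}(x_0, \partial D)$ produces the desired ring and, via Proposition~\ref{pr4}, the desired conclusion.

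The main obstacle is the balanced regime $\operatorname{diam}(A), \operatorname{diam}(A^*) \leqslant 2d$, where no centering in $A$ or $A^*$ delivers a suitable overlap, and where $D$ may be so narrow that no large ring fits. Here the plan is to join $A$ and $A^*$ by a path $\gamma \subset D$, choose $x_0$ along $\gamma$ roughly between the continua, and exploit continuity of the extremal distance functions together with a small perturbation of $x_0$ to rule out the degenerate alignments where $A$ or $A^*$ sits entirely on a single sphere about $x_0$. Verifying that the resulting ring lies inside $D$ is the technically most delicate step; in very narrow domains one may have to replace a single ring by a chain of small rings strung along $\gamma$, in the spirit of \cite[Theorem~1.1]{SKNI}.
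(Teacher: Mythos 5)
You should first note that the paper itself gives no proof of Proposition~\ref{pr1}: it is quoted from \cite[Theorem~1.1]{SKNI} (cf.~\cite[Lemma~1.15]{Na$_1$}), so there is no internal argument to compare yours against. Judged on its own, your attempt has a genuine gap exactly where the statement is nontrivial. The reduction to Proposition~\ref{pr4} requires a single ring $A(x_0,r_1,r_2)\subset D$ all of whose spheres meet both $A$ and $A^*$, and neither requirement is secured even in your ``unbalanced'' case. First, the overlap interval you produce starts at $d={\rm dist}(A,A^*)$, while the ring must satisfy $r_2<{\rm dist}(x_0,\partial D)$; if $D$ is a thin tube with ${\rm dist}(x_0,\partial D)\leqslant d$, these are incompatible, so the narrow-domain obstruction is not confined to the balanced regime as you suggest. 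Second, your claim that connectedness ``forces a non-trivial overlap above $d$'' is unjustified: $A^*$ may be a nondegenerate arc lying entirely on the sphere $S(x_0,d)$, in which case its distance set from $x_0$ is the single point $\{d\}$ and no ring of positive width has every sphere meeting $A^*$. These are fixable by perturbation, but you do not carry that out.

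The real content of the proposition is the case you defer — continua far apart relative to the in-radius of $D$ — and there your plan is only a sketch that you yourself label ``the technically most delicate step.'' No single ring can work there; one must chain local positive lower bounds along a path joining $A$ to $A^*$. The paper actually contains the needed ingredients: Proposition~\ref{pr_2} together with Proposition~\ref{pr1X} gives a positive lower bound for $M_p(\Gamma(E,F,B))$ for continua $E,F$ in a ball $B\subset D$, and the serial rule of Proposition~\ref{pr2} (or Proposition~\ref{pr3}) lets one propagate positivity along a finite chain of such balls covering a connecting path. You invoke neither; instead you appeal to doing the chaining ``in the spirit of \cite[Theorem~1.1]{SKNI},'' which is circular, since that theorem \emph{is} the statement being proved and is precisely what the authors cite in lieu of a proof. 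To complete your argument you would have to execute this chaining explicitly.
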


\medskip
The following statement is proved in \cite[Theorem~3.1]{Na$_2$} for
$p=n$.

\medskip
\begin{proposition}\label{pr2}
{\it\, Let $n-1<p<n,$ $F_1,$ $F_2,$ $F_3$ be three sets in a domain
$D$ and let $\Gamma_{ij}=\Gamma(F_1, F_2, D),$ $1\leqslant i,
j\leqslant 3.$ Then
$$M_p(\Gamma_{12})\geqslant 3^{-p}\min\{M_p(\Gamma_{13}),
M_p(\Gamma_{23}),\inf M_p(\Gamma(|\gamma_{13}|, |\gamma_{23}|,
D))\}$$
where the infimum is taken over all rectifiable paths
$\gamma_{13}\in \Gamma_{13},$ $\gamma_{23}\in \Gamma_{23}.$}
\end{proposition}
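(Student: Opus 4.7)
The plan is to follow N\"akki's argument for $p=n$ (see \cite[Theorem~3.1]{Na$_2$}) essentially verbatim; inspection shows the proof nowhere uses the specific value $p=n$ beyond $p\geqslant 1$, so it carries over to the range $n-1<p<n$. The driving observation is that any admissible function $\rho$ for $\Gamma_{12}$, when rescaled by the factor $3$, must be admissible for at least one of the three competing families on the right-hand side. The factor $3^{-p}$ is then forced by the identity $\int (3\rho)^p\,dm = 3^p \int \rho^p\,dm$.

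Concretely, I would fix $\rho\in{\rm adm}\,\Gamma_{12}$ with $\int\rho^p\,dm<\infty$ and carry out a trichotomy. If $3\rho\in{\rm adm}\,\Gamma_{13}$, then $M_p(\Gamma_{13})\leqslant 3^p\int \rho^p\,dm$; symmetrically for $\Gamma_{23}$. The substantive case is when admissibility fails for both: by definition, one extracts (locally rectifiable) paths $\gamma_{13}^*\in\Gamma_{13}$ and $\gamma_{23}^*\in\Gamma_{23}$ with
\[
\int\limits_{\gamma_{13}^*}\rho\,|dx|<\frac{1}{3},\qquad \int\limits_{\gamma_{23}^*}\rho\,|dx|<\frac{1}{3}.
\]

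The crux is then to show $3\rho\in{\rm adm}\,\Gamma(|\gamma_{13}^*|,|\gamma_{23}^*|,D)$ for this particular pair. Given any locally rectifiable $\gamma$ in that family, I would identify parameter values $t_1,t_2$ such that $\gamma_{13}^*(t_1)$ and $\gamma_{23}^*(t_2)$ are the endpoints of $\gamma$, and form the concatenation $\widetilde{\gamma}$ consisting of the subpath of $\gamma_{13}^*$ from $F_1$ to $\gamma_{13}^*(t_1)$, then $\gamma$, then the subpath of $\gamma_{23}^*$ from $\gamma_{23}^*(t_2)$ to $F_2$. This $\widetilde{\gamma}$ belongs to $\Gamma_{12}$, so admissibility of $\rho$ yields $\int_{\widetilde{\gamma}}\rho\,|dx|\geqslant 1$; since the two flanking integrals sum to strictly less than $2/3$, the middle integral exceeds $1/3$, i.e.\ $\int_{\gamma}3\rho\,|dx|>1$. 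Hence $M_p(\Gamma(|\gamma_{13}^*|,|\gamma_{23}^*|,D))\leqslant 3^p\int\rho^p\,dm$, and the infimum over path pairs is bounded by the same quantity. Taking the infimum over $\rho$ yields the asserted inequality.

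The main point requiring care is verifying that $\widetilde{\gamma}$ genuinely lies in $\Gamma(F_1,F_2,D)$, i.e.\ that its interior stays in $D$ even at the two junctions where the subpaths are glued. This is where the standing hypothesis that $F_1,F_2,F_3$ lie in the domain $D$ is essential: regardless of whether $t_1$ corresponds to an endpoint of $\gamma_{13}^*$ (a point of $F_1$ or $F_3$) or to an interior point of $\gamma_{13}^*$, the junction $\gamma_{13}^*(t_1)$ lies in $D$, and similarly for $\gamma_{23}^*(t_2)$. Beyond this bookkeeping and the standard remark that rectifiability of the subpaths follows from the finiteness of the respective line integrals, the argument is purely formal and avoids any condition on $p$ other than $p\geqslant 1$.
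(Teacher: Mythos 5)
Your proposal is correct and follows essentially the same route as the paper: fix $\rho\in{\rm adm}\,\Gamma_{12}$, run the trichotomy on whether $3\rho$ is admissible for $\Gamma_{13}$ or $\Gamma_{23}$, and in the remaining case use the concatenation of a subpath of $\gamma_{13}^*$, the connecting path, and a subpath of $\gamma_{23}^*$ to force $\int_{\gamma}3\rho\,|dx|>1$. You in fact spell out the gluing step (and why the junctions stay in $D$) that the paper's proof leaves implicit, so nothing is missing.
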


\medskip
\begin{proof} We may assume that $F_1,$ $F_2,$ $F_3$ are nonempty sets, for
otherwise there is nothing to prove. Choose $\rho\in {\rm
adm}\,\Gamma_{12}.$ If at least one of the conditions hold
\begin{equation}\label{eq28***}
\int\limits_{\gamma_{1, 3}}\rho \,|dx|\geqslant 1/3\,, \qquad 
\int\limits_{\gamma_{2, 3}}\rho\, |dx|\geqslant 1/3\,,
\end{equation}
for $\gamma_{1, 3}\in \Gamma_{1, 3},$ $\gamma_{2, 3}\in \Gamma_{2,
3},$ then we obtain that $3\rho\in {\rm adm\,}\Gamma_{1, 3}$ or
$3\rho\in {\rm adm\,}\Gamma_{2, 3}.$ Now, we obtain that
\begin{equation}\label{eq31***}
\int\limits_{D} \rho^p(x)\,dm(x)\geqslant
3^{\,-p}\min\{M_p(\Gamma_{1, 3}), M_p(\Gamma_{2, 3})\}\,.
\end{equation}
If neither relation in~(\ref{eq28***}) is true for some rectifiable
paths $\gamma_{1, 3}\in \Gamma_{1, 3},$ $\gamma_{2, 3}\in \Gamma_{2,
3},$ Then
\begin{equation}\label{eq1C}
\int\limits_{\alpha}\rho \,|dx|\geqslant 1/3
\end{equation}
for every rectifiable path $\alpha\in\Gamma(|\gamma_{13}|,
|\gamma_{23}|, D).$ Thus $3\rho\in{\rm adm}\,\Gamma(|\gamma_{13}|,
|\gamma_{23}|, D)$ which implies
\begin{equation}\label{eq1B}
M_p(\Gamma(|\gamma_{13}|, |\gamma_{23}|, D))\geqslant\int\limits_{D}
\rho^p(x)\,dm(x)\geqslant 3^{\,-p}M_p(\Gamma(|\gamma_{13}|,
|\gamma_{23}|, D))\,.
\end{equation}
Since $\rho\in {\rm adm}\,\Gamma_{12}$ was arbitrary and since
either~(\ref{eq31***}) or (\ref{eq1B}) must be true, the assertion
follows.
\end{proof}

\medskip
The following statement is proved in \cite[Theorem~3.3]{Na$_2$} for
$p=n$.

\medskip
\begin{proposition}\label{pr3}
{\it\, Let $n-1<p<n,$ Let $F_1,$ $F_2,$ $F_3$ be three sets in a
domain $D,$ let $D$ contain the spherical ring $A(y_0, r_1, r_2),$
$0<r_1<r_2<\infty,$ let $F_3$ lie in $B(y_0, r_1)$ and let
$\Gamma_{ij}$ be as in Proposition~\ref{pr2}. If one of the three
conditions

(1) $F_i$ lies in ${\Bbb R}^n\setminus B(y_0, r_2),$ $i=1, 2,$

(2) $F_1$ lies in ${\Bbb R}^n\setminus B(y_0, r_2)$ and $F_2$ is
connected with $d(F_2)\geqslant 2r_2,$

(3) $F_i$ is connected with $d(F_i)\geqslant 2r_2,$ $i=1,2,$ is
satisfied, then

$$M_p(\Gamma_{12})\geqslant 3^{\,-p}\min\{M_p(\Gamma_{13}),
M_p(\Gamma_{23}),
\frac{2^nb_{n,p}}{n-p}((r_2)^{n-p}-{r_1}^{n-p})\}\,,$$
where $b_{n,p}$ is a positive constant depending only on $n$ and
$p.$}
\end{proposition}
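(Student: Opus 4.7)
The plan is to adapt the argument of Proposition~\ref{pr2}, replacing the trajectories $|\gamma_{13}|$ and $|\gamma_{23}|$ by the augmented sets $|\gamma_{13}|\cup F_1$ and $|\gamma_{23}|\cup F_2$, and then applying Proposition~\ref{pr4} on the annulus $A(y_0,r_1,r_2)$ to obtain the third entry of the minimum. First, fix $\rho\in{\rm adm}\,\Gamma_{12}$; exactly as in Proposition~\ref{pr2}, if $\int_{\gamma_{1,3}}\rho\,|dx|\geqslant 1/3$ for every rectifiable $\gamma_{1,3}\in\Gamma_{13}$, or the corresponding statement holds for every $\gamma_{2,3}\in\Gamma_{23}$, then $3\rho\in{\rm adm}\,\Gamma_{13}$ or $3\rho\in{\rm adm}\,\Gamma_{23}$ and $\int\rho^p\,dm\geqslant 3^{-p}\min\{M_p(\Gamma_{13}),M_p(\Gamma_{23})\}$.

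In the complementary case one chooses rectifiable $\gamma_{13}\in\Gamma_{13}$ and $\gamma_{23}\in\Gamma_{23}$ with $\int_{\gamma_{13}}\rho\,|dx|<1/3$ and $\int_{\gamma_{23}}\rho\,|dx|<1/3$. The next step is to show $3\rho\in{\rm adm}\,\Gamma(|\gamma_{13}|\cup F_1,|\gamma_{23}|\cup F_2,D)$: for an arbitrary $\alpha$ in this family, I split into subcases according to whether each endpoint of $\alpha$ lies in $|\gamma_{i3}|$ or in $F_i\setminus|\gamma_{i3}|$, and concatenate $\alpha$ with appropriate subpaths of $\gamma_{13}$ and/or $\gamma_{23}$ to form a path of $\Gamma_{12}$; the estimates $\int_{\gamma_{i3}}\rho<1/3$ then force $\int_\alpha\rho\,|dx|\geqslant 1/3$ in every subcase. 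This gives
\[
\int_D\rho^p\,dm\geqslant 3^{-p}M_p(\Gamma(|\gamma_{13}|\cup F_1,|\gamma_{23}|\cup F_2,D))\geqslant 3^{-p}M_p(\Gamma(E_0,E_1,A))\,,
\]
where $A=A(y_0,r_1,r_2)$ and $E_i$ denotes the intersection of the corresponding augmented set with $A$.

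The final step is to invoke Proposition~\ref{pr4} to bound $M_p(\Gamma(E_0,E_1,A))$ from below by $\frac{2^n b_{n,p}}{n-p}(r_2^{n-p}-r_1^{n-p})$; if $E_0\cap E_1\ne\varnothing$ the modulus is infinite and the bound is trivial, so one may assume disjointness. It remains to verify that each $E_i$ meets every sphere $S(y_0,r)$ for $r\in(r_1,r_2)$. In case~(1) this is immediate: $|\gamma_{i3}|$ is a connected set with one endpoint in $F_i\subset{\Bbb R}^n\setminus B(y_0,r_2)$ and one in $F_3\subset B(y_0,r_1)$, so it already crosses the entire annulus. In cases~(2) and~(3) the augmented set $|\gamma_{i3}|\cup F_i$ is connected because the two pieces share the $F_i$-endpoint of $\gamma_{i3}$; the hypothesis $d(F_i)\geqslant 2r_2$ combined with the fact that the diameter $2r_2$ of the open ball $B(y_0,r_2)$ is not attained forces $F_i$ to contain a point outside $B(y_0,r_2)$, while the remaining endpoint of $\gamma_{i3}$ lies in $B(y_0,r_1)$, so the connected augmented set meets every sphere in $[r_1,r_2]$. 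The main obstacle I expect is precisely this geometric verification in cases~(2) and~(3): the path $\gamma_{i3}$ on its own need not traverse the annulus, and it is the continuum $F_i$ together with the diameter hypothesis that provides the bridge; taking the infimum over $\rho$ then combines the three cases into the asserted lower bound for $M_p(\Gamma_{12})$.
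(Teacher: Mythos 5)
Your proposal is correct and follows essentially the same route as the paper: handle the case where one of the two integral conditions holds for all paths exactly as in Proposition~\ref{pr2}, and otherwise pass to the augmented connected sets $F_1\cup|\gamma_{13}|$ and $F_2\cup|\gamma_{23}|$, which meet every sphere $S(y_0,t)$, $r_1<t<r_2$, so that Proposition~\ref{pr4} supplies the third term of the minimum. One small imprecision: $d(F_i)\geqslant 2r_2$ does not force $F_i$ to contain a point outside the open ball $B(y_0,r_2)$ (an open diameter segment is a counterexample), but it does force $F_i\not\subset\overline{B(y_0,t)}$ for every $t<r_2$, which is all that is needed for the sphere-crossing property on the open interval $(r_1,r_2)$.
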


\medskip
\begin{proof}
We may assume that $F_1,$ $F_2,$ $F_3$ are nonempty sets. If (1) is
satisfied, then the assertion follows directly from
Propositions~\ref{pr2} and \ref{pr4}. Assume next that (2) or (3) is
satisfied. Choose $\rho\in {\rm adm}\,\Gamma_{1, 2}.$ If at least
one of the conditions in~(\ref{eq28***}) holds for every rectifiable
path $\gamma_{1, 3}\in \Gamma_{1, 3}$ and $\gamma_{2, 3}\in
\Gamma_{2, 3},$ then (\ref{eq31***}) holds. If neither the first and
the second relations in~(\ref{eq28***}) hold for some rectifiable
paths $\gamma_{1, 3}\in \Gamma_{1, 3}$ and $\gamma_{2, 3}\in
\Gamma_{2, 3},$  then (\ref{eq1C}) holds for every rectifiable path
$\alpha\in \Gamma(F_1\cup|\gamma_{1, 3}, F_2\cup|\gamma_{2, 3}, D).$
Therefore, since $S(y_0, t)$ meets both $F_1\cup|\gamma_{1, 3}$ and
$F_2\cup|\gamma_{2, 3}$ for $r_1<t<r_2$ and since $D$ contains the
spherical ring $A(y_0, r_1, r_2),$ we obtain
\begin{equation}\label{eq1D}
\int\limits_{D} \rho^p(x)\,dm(x)\geqslant
\frac{2^nb_{n,p}}{n-p}(r_2^{n-p}-r_1^{n-p})\,.
\end{equation}
Finally, since $\rho\in {\rm adm}\,\Gamma_{1, 2}$ was arbitrary and
since either (\ref{eq31***}) or (\ref{eq1D}) must be true, the
assertion follows.
\end{proof}

\medskip
The following statement holds.

\medskip
\begin{theorem}\label{th4}
{\it Let $n-1<p\leqslant n,$ and let $\frak{F}$ be a collection of
connected sets in a domain $D$ and let $\inf h(F)>0,$ $F\in
\frak{F}.$ Then $\inf\limits_{F\in \frak{F}} M_p(\Gamma(A, F, D))>0$
either for each or for no continuum $A$ in~$D.$}
\end{theorem}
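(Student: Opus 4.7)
I would prove the theorem by showing: if $c_0 := \inf_{F \in \frak{F}} M_p(\Gamma(A_0, F, D)) > 0$ for some continuum $A_0 \subset D$, then $\inf_{F \in \frak{F}} M_p(\Gamma(A, F, D)) > 0$ for every other continuum $A \subset D$. By the symmetry of the roles of $A$ and $A_0$, this yields the stated dichotomy.

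The principal tool is Proposition~\ref{pr2} applied with $F_1 = A$, $F_2 = F$, $F_3 = A_0$, which yields
\begin{equation*}
M_p(\Gamma(A, F, D)) \;\geq\; 3^{-p}\min\bigl\{\,M_p(\Gamma(A, A_0, D)),\; M_p(\Gamma(F, A_0, D)),\; I(F)\,\bigr\},
\end{equation*}
where $I(F)$ denotes the infimum of $M_p(\Gamma(|\gamma_{13}|, |\gamma_{23}|, D))$ over rectifiable $\gamma_{13}\in\Gamma(A, A_0, D)$ and $\gamma_{23}\in\Gamma(F, A_0, D)$. Proposition~\ref{pr1} gives $M_p(\Gamma(A, A_0, D)) \geq m_1 = m_1(A, A_0, D) > 0$, and the hypothesis gives $M_p(\Gamma(F, A_0, D)) \geq c_0$. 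Everything reduces to a uniform positive lower bound on $I(F)$.

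Note first the elementary observation $h(x,y) = |x-y|/\sqrt{(1+|x|^2)(1+|y|^2)} \leq |x-y|$, so $\inf_{F}h(F)\geq r_0 > 0$ forces the Euclidean diameter $d(F)\geq r_0$ for every $F\in\frak{F}$. I would exploit this by fixing a point $y_0 \in A_0$ and Euclidean radii $r_1 < r_2$ with $\overline{B(y_0, r_2)}\subset D$ and $r_2 < \min\{d(A)/2,\, r_0/2,\, \mathrm{dist}(y_0, \partial D)\}$, and setting $K$ to be the component of $y_0$ in $A_0\cap\overline{B(y_0, r_1)}$ (a non-degenerate continuum by connectedness and non-degeneracy of $A_0$). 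Then $A$ and every $F\in\frak{F}$ are connected sets of Euclidean diameter $\geq 2r_2$, so Proposition~\ref{pr3} case~(3) applies to the triple $(A, F, K)$ and gives
\begin{equation*}
M_p(\Gamma(A, F, D)) \;\geq\; 3^{-p}\min\Bigl\{\,M_p(\Gamma(A, K, D)),\; M_p(\Gamma(F, K, D)),\; \tfrac{2^n b_{n,p}}{n-p}(r_2^{n-p} - r_1^{n-p})\,\Bigr\},
\end{equation*}
bypassing the problematic inner infimum $I(F)$ of Proposition~\ref{pr2}. Here Proposition~\ref{pr1} makes the first term strictly positive, the annular term is a positive constant independent of $F$, and $M_p(\Gamma(F, K, D))$ still needs to be bounded below uniformly.

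The chief obstacle is this last uniform lower bound on $M_p(\Gamma(F, K, D))$. Subadditivity gives
\begin{equation*}
M_p(\Gamma(F, K, D)) + M_p(\Gamma(F, A_0\setminus K, D)) \;\geq\; M_p(\Gamma(F, A_0, D)) \;\geq\; c_0,
\end{equation*}
so at least one summand is $\geq c_0/2$; if the first is, we are finished. If instead the second dominates, one reapplies Proposition~\ref{pr2} to the triple $(F, K, A_0\setminus K)$ together with a connecting subpath in $A_0$ from $A_0\setminus K$ to $K$, transferring the mass to $M_p(\Gamma(F, K, D))$. The case $p = n$ requires replacing the annular estimate of Proposition~\ref{pr3} by the standard logarithmic bound $c_n[\log(r_2/r_1)]^{1-n}$ for spherical rings, and the degenerate case $A\cap A_0\neq\varnothing$ is settled by monotonicity of $M_p$ when one continuum contains the other and by passage to a disjoint subcontinuum of $A_0$ otherwise.
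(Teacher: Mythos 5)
Your overall strategy --- transfer the property from a continuum $A_0$ for which $\inf_{F}M_p(\Gamma(A_0,F,D))=c_0>0$ to an arbitrary continuum $A$ via Propositions~\ref{pr1}, \ref{pr2} and \ref{pr3} --- is the right one, and you correctly identify the crux: one needs a lower bound on $M_p(\Gamma(F,F_3,D))$, uniform over $F\in\frak{F}$, for the small set $F_3$ fed into Proposition~\ref{pr3}. But your resolution of that crux does not close. Having fixed a single small continuum $K\subset A_0\cap\overline{B(y_0,r_1)}$, you split $M_p(\Gamma(F,K,D))+M_p(\Gamma(F,A_0\setminus K,D))\geqslant c_0$ and, in the case where the second summand dominates, propose to ``transfer the mass'' by reapplying Proposition~\ref{pr2} to the triple $(F,K,A_0\setminus K)$. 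That application produces exactly the term you were trying to avoid: the inner infimum $\inf M_p(\Gamma(|\gamma_{13}|,|\gamma_{23}|,D))$ taken over paths $\gamma_{13}\in\Gamma(F,A_0\setminus K,D)$, which depend on $F$ and admit no lower bound uniform over $\frak{F}$ (the geometric separation that lets Proposition~\ref{pr3} replace this infimum by an annulus term is not available here, since $K$ and $A_0\setminus K$ are adjacent rather than separated by a spherical ring contained in $D$). The mention of ``a connecting subpath in $A_0$'' does not fit the hypotheses of Proposition~\ref{pr2}, whose infimum ranges over paths of the families $\Gamma_{13}$, $\Gamma_{23}$, not over auxiliary arcs. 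In effect, bounding $M_p(\Gamma(F,K,D))$ from below uniformly for one fixed $K$ is the very statement of Theorem~\ref{th4} for $A=K$, so the argument is circular at this point.

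The missing idea is the paper's finite covering and pigeonhole step: cover the ``good'' continuum $A_0$ by finitely many closed balls $A_1,\dots,A_q$ of radius $r$ with $4r<\min\{\inf h(F),\,d(A_0,A)\}$ and $\overline{B(a_i,2r)}\subset D$. Since every path of $\Gamma(A_0,F,D)$ starts in some $A_i$, subadditivity gives $M_p(\Gamma(A_{i},F,D))\geqslant c_0/q$ for some index $i=i(F)$; although $i$ depends on $F$, it ranges over a finite set, so the quantities $\delta_i=M_p(\Gamma(A_i,A,D))>0$ (positive by Proposition~\ref{pr1}) admit a common positive lower bound. One then applies Proposition~\ref{pr3}(2) with $F_3=A_{i(F)}\subset B(a_{i(F)},r)$, $F_1=A$ (which lies outside $B(a_{i(F)},2r)$) and $F_2=F$ (connected with $d(F)\geqslant 4r$), obtaining a bound by the minimum of $\delta_1,\dots,\delta_q$, $c_0/q$ and the annulus constant --- all independent of $F$. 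This is precisely the device that lets the ``small set'' vary with $F$ while keeping the estimate uniform; without it your chain of reductions does not terminate.
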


\medskip
\begin{proof}
The proof in the case $p=n$ was established by N\"{a}kki, see
\cite[Theorem~4.1]{Na$_2$}. Our task is to establish this fact for
an arbitrary order of the modulus $p\in(n-1, n).$ Let $A$ and
$A^{\,*}$ be two continua in $D$ and let $M_p(\Gamma(A, F,
D))\geqslant\delta>0$ for all $F\in \frak{F}.$ Assume first that
$A\cap A^{\,*}=\varnothing.$ Choose a number $r>0$ so that
$\overline{B(a, 2r)}\subset D$ for each point $a\in A$ and so that
$0<4r<\min\{\inf h(F), d(A, A^{\,*})\}.$ Let $A_1,\ldots, A_q$ be a
finite covering of $A$ by closed balls with centers $a_i\in A,$
$i=1,2,\ldots, q,$ and radii $r.$ Let $M_p(\Gamma(A_i, A^{\,*},
D))=\delta_i.$ By Proposition~\ref{pr1} $\delta_i>0.$ We claim that
$$M_p(\Gamma(A^{\,*}, F, D))\geqslant 3^{\,-p}\min\bigl\{\delta/q, \delta_1,\ldots, \delta_q,
\frac{2^nb_{n,p}}{n-p}\bigl((2r)^{n-p}-(r)^{n-p}\bigr)\bigr\}\,.$$
For this, let $F\in \frak{F}.$ Then by the subadditivity of the
modulus, $0<\delta\leqslant M_p(\Gamma(A, F,
D))\leqslant\sum\limits_{i=1}^qM_p(\Gamma(A_i, F, D)),$ so that
$M_p(\Gamma(A_i, F, D))\geqslant \delta/q$ for some $i.$ Fix this
$i.$ Since $A^{\,*}\cap B(a_i, 2r)=\varnothing$ and since $d(F)>
4r,$ the assertion follows from Proposition~\ref{pr3}(2) by setting
$F_1=A^{\,*},$ $F_2=F,$ and $F_3=A_i.$ In the preceding argument we
assumed that $A\cap A^{\,*}=\varnothing.$ Suppose now that $A\cap
A^{\,*}\ne\varnothing.$ If the set $D\setminus (A \cup A^{\,*})$ is
nonempty, and therefore contains a continuum $A^{\,\prime},$ we may
apply the above procedure first to the sets $A, A^{\,\prime}$ and
then to $A^{\,\prime}, A^{\,*}.$ This completes the proof of
Theorem~\ref{th4}.~$\Box$
\end{proof}

\section{Proof of Theorem~\ref{th2}}

Following \cite[section~7.22]{He}, given a real-valued function $u$
in a metric space $X,$ a Borel function $\rho\colon X\rightarrow [0,
\infty]$ is said to be an {\it upper gradient} of a function
$u:X\to{\Bbb R}$ if $|u(x)-u(y)|\leqslant
\int\limits_{\gamma}\rho\,|dx|$ for each rectifiable curve $\gamma$
joining $x$ and $y$ in $X.$ Let $(X, \mu)$ be a metric measure space
and let $1\leqslant p<\infty.$ We say that $X$ admits {\it $(1;
p)$-Poincare inequality} if there is a constant $C\geqslant 1$ such
that
$$\frac{1}{\mu(B)}\int\limits_{B}|u-u_B|d\mu(x)\leqslant C\cdot({\rm
diam\,}B)\left(\frac{1}{\mu(B)} \int\limits_{B}\rho^n
d\mu(x)\right)^{1/n}$$
for all balls $B$ in $X,$ for all bounded continuous functions $u$
on $B,$ and for all upper gradients $\rho$ of $u.$ Metric measure
spaces where the inequalities
$$\frac{1}{C}R^{n}\leqslant \mu(B(x_0,
R))\leqslant CR^{n}$$
hold for a constant $C\geqslant 1$, every $x_0\in X$  and all
$R<{\rm diam}\,X$, are called {\it Ahlfors $n$-regular.} The
following result holds (see \cite[Proposition~4.7]{AS}).

\begin{proposition}\label{pr_2}
Let $X$ be a $Q$-Ahlfors regular metric measure space that supports
$(1; p)$-Poincar\'{e} inequality for some $p>1$ such that
$Q-1<p\leqslant Q.$ Then there exists a constant $C>0$ having the
property that, for $x\in X,$ $R>0$ and continua $E$ and $F$ in $B(x,
R),$
$$M_p(\Gamma(E, F, X))\geqslant \frac{1}{C}
\cdot\frac{\min\{{\rm diam}\,E, {\rm diam}\,F\}}{R^{1+p-Q}}\,.$$
\end{proposition}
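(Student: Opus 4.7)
The plan is to argue by the classical Heinonen--Koskela potential-function method. Given $\rho\in{\rm adm}\,\Gamma(E,F,X)$, one builds a potential function $u:X\to[0,1]$ with $u\equiv 0$ on $E$, $u\equiv 1$ on $F$, and having $\rho$ as an upper gradient, then applies the $(1;p)$-Poincar\'{e} inequality on a ball $B\supset E\cup F$ and exploits the Ahlfors $Q$-regularity of $X$ together with the diameter lower bounds on the continua to extract the lower bound on $\int_B \rho^p\,d\mu$.

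Specifically, set $u(y):=\min\bigl\{1,\;\inf_\gamma\int_\gamma\rho\,ds\bigr\}$, where the infimum runs over rectifiable paths $\gamma$ from $E$ to $y$ in $X$; admissibility gives $u=0$ on $E$ and $u=1$ on $F$, and the triangle inequality for the $\rho$-distance shows $\rho$ is an upper gradient of $u$. Take $B:=B(x,2R)$ and apply the $(1;p)$-Poincar\'{e} inequality to $u$ on $B$: writing $L:=\frac{1}{\mu(B)}\int_B|u-u_B|\,d\mu$, it gives
$$
L\;\leq\;C\,R\,\Bigl(\frac{1}{\mu(B)}\int_B \rho^p\,d\mu\Bigr)^{1/p},
$$
equivalently $\int_B\rho^p\,d\mu\gtrsim L^p\,R^{Q-p}$ (using $\mu(B)\asymp R^{Q}$ by Ahlfors regularity).

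The technical core is to establish $L\gtrsim(\min\{{\rm diam\,}E,{\rm diam\,}F\}/R)^{1/p}$. For this, one uses Ahlfors regularity to estimate the measure of tubular neighborhoods $T_\varepsilon(E)$, $T_\varepsilon(F)$ of the two continua (at scale $\varepsilon\asymp R$) as being $\gtrsim{\rm diam\,}E\cdot\varepsilon^{Q-1}$ and $\gtrsim{\rm diam\,}F\cdot\varepsilon^{Q-1}$ respectively, and combines this with a Hajlasz--Koskela type telescoping-chain estimate to show that $u\leq 1/3$ on a substantial portion of $T_\varepsilon(E)$ and $u\geq 2/3$ on a substantial portion of $T_\varepsilon(F)$; the threshold $p>Q-1$ is exactly what makes this telescoping estimate converge. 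A Chebyshev-type argument on the level sets $\{u<1/3\}$ and $\{u>2/3\}$ then yields the lower bound on $L$. Substituting into the Poincar\'{e} inequality and rearranging gives
$$
\int_B \rho^p\,d\mu\;\gtrsim\;\frac{\min\{{\rm diam\,}E,{\rm diam\,}F\}}{R^{1+p-Q}},
$$
and taking the infimum over admissible $\rho$ produces the claimed lower bound on $M_p(\Gamma(E,F,X))$. The main obstacle is the telescoping pointwise reach estimate near the continua, which is precisely where the hypothesis $p>Q-1$ enters essentially (it is the threshold at which the H\"older-chain sum over $(Q-1)$-dimensional families of short connecting curves converges); the detailed execution is in \cite[Proposition~4.7]{AS}.
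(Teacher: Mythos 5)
First, a point of comparison: the paper does not prove Proposition~\ref{pr_2} at all --- it is quoted verbatim from \cite[Proposition~4.7]{AS} --- so there is no in-paper argument to match. Your skeleton (admissible $\rho\mapsto$ potential $u(y)=\min\{1,\inf_\gamma\int_\gamma\rho\,ds\}$, then Poincar\'e inequality plus Ahlfors regularity) is the right family of ideas and is indeed what underlies the Adamowicz--Shanmugalingam proof.

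However, the step on which your derivation pivots, namely $L:=\frac{1}{\mu(B)}\int_B|u-u_B|\,d\mu\gtrsim\left(\min\{{\rm diam}\,E,{\rm diam}\,F\}/R\right)^{1/p}$, is a genuine gap. The mechanism you describe --- $\mu(T_\varepsilon(E))\gtrsim{\rm diam}\,E\cdot\varepsilon^{Q-1}$ with $\varepsilon\asymp R$, level sets $\{u<1/3\}$ and $\{u>2/3\}$, Chebyshev --- yields only $L\gtrsim\min\{{\rm diam}\,E,{\rm diam}\,F\}/R$ (first power), which after substitution into the Poincar\'e inequality gives $\int_B\rho^p\,d\mu\gtrsim\min\{\cdot\}^{p}R^{Q-2p}$, weaker than the claim by the factor $(\min\{\cdot\}/R)^{p-1}$. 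Worse, the bound with exponent $1/p$ is not merely unproved but false in general: in ${\Bbb R}^n$ take $E,F$ of diameter $d\ll R$ at mutual distance $\asymp R$ and let $\rho=|\nabla u|$ for the truncated $p$-capacitary potential of $E$; this $\rho$ is admissible, yet $L\asymp(d/R)^{\min\{(n-p)/(p-1),\,n\}}$, which for $p$ close to $n-1$ is $o\left((d/R)^{1/p}\right)$. Hence no single application of the $(1;p)$-Poincar\'e inequality on the ball $B(x,2R)$ can produce the stated sharp estimate. The actual proof runs the telescoping chain $|u(y)-u_{B}|\lesssim\sum_i 2^{-i}R\left(\frac{1}{\mu(B_i(y))}\int_{B_i(y)}\rho^p\,d\mu\right)^{1/p}$ pointwise at $\gtrsim{\rm diam}\,E/\varepsilon$ separated points $y_j$ of the smaller continuum, extracts for each $y_j$ a ball $B(y_j,r_j)$ with $\int_{B(y_j,r_j)}\rho^p\,d\mu\gtrsim r_j^{\,Q-p+\delta p}R^{-\delta p},$ and then sums over a bounded-overlap subfamily using $\sum_j r_j\gtrsim{\rm diam}\,E$; the hypothesis $p>Q-1$ enters precisely to make the exponent $Q-p+\delta p\leqslant 1$, so that $r_j\leqslant R$ gives $\sum_j r_j^{\,Q-p+\delta p}\geqslant R^{\,Q-p+\delta p-1}\sum_j r_j$. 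That summation of localized energies over the continuum, rather than a single global mean-oscillation bound, is the missing core of the argument. (A minor remark in your favor: the paper's displayed $(1;p)$-Poincar\'e inequality carries the exponent $n$ on $\rho$, which is a typo; your version with $\rho^p$ is the correct one.)
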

Let $(X, \mu)$ be a metric space with measure $\mu.$ For each real
number $n\ge 1,$ we define {\it the Loewner function} $\phi_n:(0,
\infty)\rightarrow [0, \infty)$ on $X$ as
$$\phi_n(t)=\inf\{M_n(\Gamma(E, F, X)): \Delta(E, F)\leqslant t\}\,,$$
where the infimum is taken over all disjoint nondegenerate continua
$E$ and $F$ in $X$ and
$$\Delta(E, F):=\frac{{\rm dist}\,(E,
F)}{\min\{{\rm diam\,}E, {\rm diam\,}F\}}\,.$$
A pathwise connected metric measure space $(X, \mu)$ is said to be a
{\it Loewner space} of exponent $n,$ or an $n$-Loewner space, if the
Loewner function $\phi_n(t)$ is positive for all $t> 0$ (see
\cite[section~2.5]{MRSY} or \cite[Ch.~8]{He}). Observe that ${\Bbb
R}^n$ and ${\Bbb B}^n\subset {\Bbb R}^n$ are Loewner spaces (see
\cite[Theorem~8.2 and Example~8.24(a)]{He}). As known, a condition
$\mu(B(x_0, r))\geqslant C\cdot r^n$ holds in Loewner spaces $X$ for
a constant $C>0$, every point $x_0\in X$  and all $r<{\rm diam}\,X.$

\medskip
\begin{proposition}\label{pr1X}
An open ball is an Ahlfors $n$-regular metric space in which $(1;
p)$-Poincare inequality holds for every $p\geqslant 1.$
\end{proposition}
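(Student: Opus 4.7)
Let $X = B(x_0, R_0)$ denote the open ball, equipped with the Euclidean metric and the Lebesgue measure $m$; the two claims~---~Ahlfors $n$-regularity and the $(1;p)$-Poincar\'e inequality~---~are verified separately.

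For Ahlfors $n$-regularity, the upper estimate $m(X \cap B(x, r)) \leqslant \Omega_n r^n$ is immediate since $m$ is Lebesgue measure. For the lower estimate with $x \in X$ and $r \in (0, 2R_0) = (0, \mathrm{diam}\, X)$, the plan is to exploit the convexity of $X$ in order to exhibit a Euclidean ball of radius $\approx r$ lying inside $X \cap B(x, r)$. When $|x - x_0| \geqslant r/2$, the point $y$ obtained by moving distance $r/2$ from $x$ along the segment $[x, x_0]$ satisfies $B(y, r/4) \subset X \cap B(x, r)$, as is checked by two applications of the triangle inequality (the first shows $B(y,r/4)\subset B(x,r)$, and the second, using $|y - x_0| = |x - x_0| - r/2 < R_0 - r/2$, shows $B(y,r/4) \subset X$). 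When $|x - x_0| < r/2$, the analogous construction with $y := x_0$ works, using $r/4 < R_0$. In both cases $m(X \cap B(x, r)) \geqslant \Omega_n (r/4)^n$, and Ahlfors regularity follows with a constant depending only on $n$.

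For the $(1; p)$-Poincar\'e inequality, the key observation is that every metric ball in $X$ has the form $B_{\mathrm{Eucl}}(z, r) \cap X$, which is the intersection of two convex Euclidean balls and therefore itself convex. I then invoke the classical Poincar\'e inequality for bounded convex domains $\Omega \subset \mathbb{R}^n$,
\begin{equation*}
\int_{\Omega} |u - u_\Omega|\, dm \leqslant C_n \cdot \mathrm{diam}(\Omega) \int_{\Omega} |\nabla u|\, dm,
\end{equation*}
valid for every Lipschitz $u$, with a constant $C_n$ depending only on the dimension (this is the Payne--Weinberger inequality). Since any upper gradient $\rho$ of a Lipschitz function $u$ satisfies $\rho \geqslant |\nabla u|$ almost everywhere, this yields the $(1; 1)$-Poincar\'e inequality for every metric ball $B \subset X$. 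A single application of H\"older's inequality, $\frac{1}{m(B)}\int_B \rho \, dm \leqslant \bigl(\frac{1}{m(B)} \int_B \rho^p \, dm \bigr)^{1/p}$, upgrades $(1;1)$ to $(1;p)$ for every $p \geqslant 1$.

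The main technical point to watch is the dimension-only dependence of the Poincar\'e constant. The metric balls $B_{\mathrm{Eucl}}(z, r) \cap X$ degenerate into thin spherical caps as $z$ approaches $\partial X$ with $r$ of comparable scale, so one genuinely needs a Poincar\'e inequality whose constant is independent of the shape of the convex set~---~and this is exactly what the convex-domain inequality of Payne--Weinberger provides, enabling a single constant $C = C(n)$ to serve all metric balls in $X$ simultaneously, which is what the definition from \cite[Section~7.22]{He} requires.
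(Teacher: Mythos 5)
Your proof is correct in substance but follows a genuinely different route from the paper, which disposes of the proposition in two citations: the lower mass bound $m(B(x,r)\cap X)\geqslant C r^n$ is extracted from the earlier remark that ${\Bbb B}^n$ is a Loewner space (via \cite[Theorem~8.2, Example~8.24(a)]{He}), and the $(1;p)$-Poincar\'e inequality is quoted from \cite[Theorem~10.5]{HK}, which covers bounded convex (indeed John) domains. You instead verify both properties by hand: the $r/4$-ball construction inside $B(x,r)\cap X$ via convexity is a clean, self-contained replacement for the Loewner-space argument, and your observation that every metric ball of $X$ is an intersection of two Euclidean balls, hence convex, reduces the Poincar\'e inequality to the convex-domain case. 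Two caveats on the second half. First, the inequality you need is the $L^1$ (mean-oscillation) Poincar\'e inequality with constant depending only on ${\rm diam}(\Omega)$; Payne--Weinberger is the $L^2$ eigenvalue bound, and the sharp $L^1$ analogue $\int_{\Omega}|u-u_{\Omega}|\,dm\leqslant \frac{1}{2}{\rm diam}(\Omega)\int_{\Omega}|\nabla u|\,dm$ for convex $\Omega$ is due to Acosta and Dur\'an --- the attribution should be fixed, but your instinct is exactly right that the naive Riesz-potential proof gives a constant of order ${\rm diam}(\Omega)^{n+1}/|\Omega|$, which degenerates on the thin spherical caps you identify, so the shape-independent version is genuinely needed. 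Second, the definition in \cite[Section~7.22]{He} quantifies over bounded continuous $u$ with upper gradient $\rho$, not only Lipschitz $u$; to close this you should either note that a continuous function with an upper gradient in $L^1_{\rm loc}$ lies in $W^{1,1}_{\rm loc}$ with $|\nabla u|\leqslant\rho$ a.e. (so the convex-domain inequality still applies), or that the case $\rho\notin L^1(B)$ is vacuous. With these two repairs your argument is complete, and it has the advantage of being elementary and quantitative where the paper's proof is purely by reference.
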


\begin{proof}
By comments given above, the ball is Ahlfors $n$-regular. By
\cite[Theorem~10.5]{HK}, the $(1; p)$-Poincare inequality holds in
this ball for any $p\geqslant 1.$ ~$\Box$
\end{proof}

\medskip
\begin{lemma}\label{lem4}
{\it\, The statement of Theorem~\ref{th2} is true if in the
conditions of this theorem condition~2) is replaced by the following
condition: for any $y_0\in\overline{{\Bbb R}^n}$ there is a Lebesgue
measurable function $\psi:(0, \varepsilon_0)\rightarrow (0,\infty)$
such that
\begin{equation}\label{eq3.7.3}
0<I(\varepsilon,
\varepsilon_0):=\int\limits_{\varepsilon}^{\varepsilon_0}\psi(t)\,dt
< \infty\,,
\end{equation}
while there exists a function $\alpha=\alpha(\varepsilon,
\varepsilon_0)\geqslant 0$ such that
\begin{equation} \label{eq3.7.4}
\int\limits_{A(y_0, \varepsilon, \varepsilon_0)}
Q(y)\cdot\psi^{\,p}(|y-y_0|)\,dm(y)= \alpha(\varepsilon,
\varepsilon_0)\cdot I^p(\varepsilon, \varepsilon_0)\,,\end{equation}
where $A(y_0, \varepsilon, \varepsilon_0)$ is defined in
(\ref{eq1**}).}
\end{lemma}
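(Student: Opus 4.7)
The proof follows the scheme of Lemma~\ref{lem1} word-for-word except at the single step that used the $p$-uniformity of $D$. Arguing by contradiction, I extract $C_k\subset K$ with $h(C_k)\geq\varepsilon_1>0$, mappings $f_k\in\frak{F}^{p,\delta}_{Q,a,b}(D)$ with $h(f_k(C_k))<1/k$, a fixed path $\gamma:[0,1]\to D$ joining $a$ and $b$, and (after passing to a subsequence so that some $y_k\in f_k(C_k)$ tends to $y_0\in\overline{{\Bbb R}^n}$) reproduce in the case $y_0\neq\infty$ the two-case analysis of Lemma~\ref{lem1} verbatim to obtain subcontinua $D^*_k\subset|\gamma|$ with $h(D^*_k)\geq\delta_*>0$ and $f_k(D^*_k)\subset{\Bbb R}^n\setminus B_h(y_0,\delta/3)$. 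The subordination $\Gamma_k:=\Gamma(C_k,D^*_k,D)>\Gamma_{f_k}(y_0,1/k,\varepsilon_0)$, the test function $\eta_k=\psi/I(1/k,\varepsilon_0)$ on $(1/k,\varepsilon_0)$, and hypotheses~(\ref{eq3.7.3})--(\ref{eq3.7.4}) give the upper bound $M_p(\Gamma_k)\leq\alpha(1/k,\varepsilon_0)\to 0$ as $k\to\infty$, exactly as in Lemma~\ref{lem1}.

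The contradictory lower bound $M_p(\Gamma_k)\geq\delta_1>0$ uses the new hypothesis $C\subset K$ through the fact that $\widetilde{K}:=K\cup|\gamma|$ is a fixed compactum inside $D$, so $C_k,D^*_k\subset\widetilde{K}$ for every $k$. I choose $y'_0\in D\setminus\widetilde{K}$ and radii $0<r_0<r_1<r_2<\mathrm{dist}(y'_0,\widetilde{K})$ with $\overline{A(y'_0,r_1,r_2)}\subset D$, and set $F_3:=\overline{B(y'_0,r_0)}$. Then $C_k,D^*_k\subset{\Bbb R}^n\setminus B(y'_0,r_2)$ and $F_3\subset B(y'_0,r_1)$, so Proposition~\ref{pr3}(1) applied with $F_1=C_k$, $F_2=D^*_k$ yields
\[
M_p(\Gamma_k)\;\geq\;3^{-p}\min\Bigl\{M_p(\Gamma(F_3,C_k,D)),\;M_p(\Gamma(F_3,D^*_k,D)),\;\tfrac{2^n b_{n,p}}{n-p}(r_2^{n-p}-r_1^{n-p})\Bigr\}.
\]
The third term is a positive constant independent of $k$. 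To bound the first two uniformly in $k$, I invoke Theorem~\ref{th4}: the dichotomy there, applied to the families $\{C_k\}$ and $\{D^*_k\}$ (each with $\inf h>0$), reduces the required positivity of $\inf_k M_p(\Gamma(F_3,\cdot,D))$ to positivity for some single convenient continuum $A_0\subset D$. Taking $A_0$ and $\widetilde{K}$ both inside a common open Euclidean ball $B_0\subset D$ and invoking Propositions~\ref{pr1X}--\ref{pr_2} in $B_0$, I obtain $M_p(\Gamma(A_0,C_k,B_0))\geq c>0$, which propagates to $D$ by the monotonicity $M_p(\Gamma(A_0,C_k,B_0))\leq M_p(\Gamma(A_0,C_k,D))$ that comes from $B_0\subset D$.

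The case $y_0=\infty$ is handled by the inversion $\psi_1(y)=y/|y|^2$ as at the end of the proof of Lemma~\ref{lem1}. The chief obstacle is the existence of the ball $B_0\subset D$ containing $A_0\cup\widetilde{K}$: in a general domain $D$ this may be impossible, in which case one covers $\widetilde{K}$ by a finite chain of balls whose doubles all lie in $D$ and iterates Proposition~\ref{pr3} along this chain, in the spirit of the covering argument that proves Theorem~\ref{th4} itself; this chaining is the only place where the argument departs from a direct transcription of Lemma~\ref{lem1}.
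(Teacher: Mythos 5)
Your overall strategy is the same as the paper's at the level of tools: replace the single use of $p$-uniformity in Lemma~\ref{lem1} by a lower modulus bound derived from the Loewner-type estimate (Propositions~\ref{pr1X} and \ref{pr_2}) combined with N\"akki's dichotomy (Theorem~\ref{th4}), keeping the upper-bound half (the test function $\eta_k=\psi/I(1/k,\varepsilon_0)$ and $\alpha(1/k,\varepsilon_0)\to0$) verbatim. Your choice of anchor continuum ($D^{\,*}_k\subset|\gamma|$, as in Lemma~\ref{lem1}) versus the paper's (a small Euclidean ball $E$ about $a$ whose images stay away from $y_0$) is a harmless variation. The genuine gap is in the base case of the dichotomy, i.e.\ in establishing $\inf_k M_p(\Gamma(A_0,C_k,D))>0$ for at least one continuum $A_0$. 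You reduce this to finding a single Euclidean ball $B_0\subset D$ containing $A_0$ together with the whole compactum $\widetilde K=K\cup|\gamma|$, which, as you concede, need not exist; and the ``finite chain of balls'' you offer as a remedy is not carried out and is structurally circular: iterating Proposition~\ref{pr3} along a chain lower-bounds $M_p(\Gamma_{12})$ by moduli of families attached to intermediate continua, which are precisely the quantities still awaiting a lower bound. Thus the one step where the new hypothesis $C\subset K$ must actually do work is the step left open.

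The paper closes this step with a purely local argument that you should substitute: pass to a subsequence so that some $x_k\in C_k$ converges to a point $x_0\in K\subset D$; fix $\varepsilon_1<\min\{{\rm dist}\,(x_0,\partial D),\tfrac12\widetilde{\varepsilon}_0\}$; by the Kuratowski boundary-bumping theorem the component $M_k$ of $C_k\cap\overline{B(x_0,\varepsilon_1)}$ through $x_k$ meets $S(x_0,\varepsilon_1)$, hence $d(M_k)\geqslant\varepsilon_1/2$ for large $k$. Now Propositions~\ref{pr1X} and \ref{pr_2}, applied in one fixed ball $B(x_0,\varepsilon^{\,*})\subset D$ to $M_k$ and one fixed continuum $F\subset B(x_0,\varepsilon_1)$, give $M_p(\Gamma(C_k,F,D))\geqslant\Delta>0$ by monotonicity, and Theorem~\ref{th4} then transfers this uniform bound to any other continuum (your $F_3$, or the paper's neighbourhood of $a$); no ball need contain all of $\widetilde K$, only a definite piece of each $C_k$. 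The same device handles your family $\{D^{\,*}_k\}$, whose members all contain $a$ or $b$ and have chordal diameter at least $\delta_*$. With this replacement your argument goes through. Two minor further points: Proposition~\ref{pr3} as stated covers only $n-1<p<n$, so for $p=n$ you must invoke N\"akki's original version; and Theorem~\ref{th4} is a dichotomy over the continuum $A$ for a fixed family, so when you apply it you should make explicit which single continuum supplies the positive case.
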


\medskip
\begin{proof}
Let us prove the lemma by contradiction. Let us assume that its
conclusion is not true. Then there exists
$\widetilde{\varepsilon}_0>0$ such that for any $k\in {\Bbb N}$
there is a continuum $C_k\subset K$ and a mapping $f_k\in
\frak{F}^{p, \delta}_{Q, a, b}(D)$ such that $h(C_k)\geqslant
\widetilde{\varepsilon}_0,$ however, $h(f_k(C_k))<\frac{1}{k}.$

Due to the compactness of $K,$ there is a sequence $x_k\in C_k$
which convergence to some point $x_0\in K$ as $k\rightarrow \infty.$
Since $K$ is a compactum in $D,$ $x_0 \in D.$ Now we choose a ball
$B(x_0, \varepsilon_1)\subset D,$ where
\begin{equation}\label{eq1E}
\varepsilon_1<\{{\rm dist}\,(x_0, \partial D),
\frac{1}{2}\widetilde{\varepsilon}_0\}\,.
\end{equation}
Observe that
\begin{equation}\label{eq1F}
C_k\cap B(x_0, \varepsilon_1)\ne\varnothing \ne B(x_0,
\varepsilon_1)\setminus C_k
\end{equation}
for sufficiently large $k\in {\Bbb N}.$ Indeed, since $x_k\in C_k$
and $x_k\rightarrow x_0,$ then $C_k\cap B(x_0,
\varepsilon_1)\ne\varnothing$ for sufficiently large $k.$ Moreover,
the inclusion $C_k\subset B(x_0, \varepsilon_1)$ is not possible,
because in this case it may be $d(C_k)\leqslant d(B(x_0,
\varepsilon_1))=2\varepsilon_1$ but, on the other hand,
$d(C_k)\geqslant \widetilde{\varepsilon}_0$ and $d(B(x_0,
\varepsilon_1))=2\varepsilon_1<\widetilde{\varepsilon}_0,$
see~(\ref{eq1E}).

\medskip
Let $M_k$ be a component of $C_k\cap \overline{B(x_0,
\varepsilon_1)}$ consisting the point $x_k.$ By~(\ref{eq1F}) and due
to \cite[Theorem~1.III.5.47]{Ku} $C_k\cap S(x_0,
\varepsilon_1)\ne\varnothing$ for any $k\in {\Bbb N}.$ Let $y_k\in
C_k\cap S(x_0, \varepsilon_1).$ Now, by the triangle inequality
\begin{equation}\label{eq4A}
d(M_k)\geqslant |x_k-y_k|\geqslant
|y_k-x_0|-|x_0-x_k|=\varepsilon_1-|x_0-x_k|\geqslant
\varepsilon_1/2\end{equation}
for sufficiently large $k\in {\Bbb N}$ because $|x_0-x_k|\rightarrow
0$ as $k\rightarrow\infty.$ Let us fix $\varepsilon^*>0$ such that
$0<\varepsilon_1<\varepsilon^{\,*}$ and the ball $B(x_0,
\varepsilon^{\,*})$ lies in $D$ else. Fix a nondegenerate continuum
$F$ in $B(x_0, \varepsilon_1).$ Applying Propositions~\ref{pr_2} and
\ref{pr1X} for the ball $B(x_0, \varepsilon^{\,*})=X=B(x, R),$
$R=\varepsilon^{\,*},$ $x=x_0,$ $E=M_k$ and $F=F,$ and
using~(\ref{eq4A}), we obtain that
\begin{equation}\label{eq3A}
M_p(\Gamma(M_k, F, B(x_0, \varepsilon^{\,*})))\geqslant \frac{1}{C}
\cdot\frac{\min\{\varepsilon_1/2, {\rm
diam}\,F\}}{(\varepsilon^{\,*})^{1+p-n}}:=\Delta=const
\end{equation}
for some a number $\Delta>0$ and for all sufficiently large $k.$
Since $\Gamma(M_k, F, B(x_0, \varepsilon^{\,*}))\subset \Gamma(C_k,
F, D),$ it follows from (\ref{eq3A}) that
\begin{equation}\label{eq4B}
M_p(\Gamma(C_k, F, D)\geqslant \Delta
\end{equation}
for sufficiently large $k.$

\medskip
On the other hand, let us join the points $a$ and $b$ with a path
$\gamma:[0, 1]\rightarrow D,$ $\gamma(0)=a,$ $\gamma(1)=b,$ in $D.$
It follows from the conditions of the lemma that
$h(\bigl|f_k(\gamma)\bigr|)\geqslant \delta$ for any $k=1,2,\ldots
,$ $f_k\in \frak{F}^{p, \delta}_{Q, a, b}(D)$ Due to the compactness
of $\overline{{\Bbb R}^n},$ we may assume that $y_k\rightarrow
y_0\in\overline{{\Bbb R}^n}$ as $k\rightarrow\infty$ for some
sequence $y_k\in f_k(C_k)$ and some $y_0\in \overline{{\Bbb R}^n}.$
Let us firstly consider the case when $y_0\ne\infty.$ We may
consider that the sequences $f_k(a)$ and $f_k(b)$ converge to some
points $z_1$ and $z_2$ as $k\rightarrow \infty$ because
$\overline{{\Bbb R}^n}$ is a compact space. Due to the condition
$h(f_k(a), f_k(b))\geqslant\delta,$ at least one of the above points
does not coincide with $y_0.$ Without loss of generality, we may
consider that $z_1\ne y_0.$ Since $f_k$ is equicontinuous at $a,$
given $\sigma>0$ there is $\chi=\chi(\sigma)>0$ such that $h(f_k(x),
f_k(a))<\sigma$ for $|x-a|<\chi.$ We may chose numbers $r_1, r_2>0$
such small that
\begin{equation}\label{eq6A}
B_h(z_1, r_1)\cap B_h(y_0, r_2)=\varnothing\,.
\end{equation}
For instance, we may set $r_1=r_2=\frac{1}{2}h(z_1, y_0);$ in this
case, the relation (\ref{eq6A}) follows by the triangle inequality.
Again, by the triangle inequality
$$h(f_k(x), z_1)\leqslant h(f_k(x), f_k(a))+ h(f_k(a), z_1)<\sigma+h(f_k(a), z_1)$$
for $|x-a|<\chi$ and since $h(f_k(a), z_1)\rightarrow 0$ as
$k\rightarrow\infty$ the latter relation implies that $f_k(x)\in
B_h(z_1, r_1)$ for sufficiently large $k$ and choosing
$\sigma=r_1/2.$ Let $E=\{|x-a|\leqslant\chi\},$ where $\chi$ is
mentioned above.

\medskip
We set $\Gamma_k=\Gamma(C_k, E, D).$ By~(\ref{eq4B}) and by
Theorem~\ref{th4}
\begin{equation}\label{eq5A}
M_p(\Gamma_k)\geqslant \Delta_2>0
\end{equation}
for sufficiently large $k=1,2,\ldots $ and some $\Delta_2>0.$ Let us
consider $\varepsilon_0$ from the conditions of the lemma. Reducing
it, if necessary, we may consider that $B(y_0, \varepsilon_0)\subset
B_h(y_0, r_2).$

Since $h(f_k(C_k))<1/k,$ we may assume that
\begin{equation*}\label{eq3D}
f_{k}(C_{k})\subset B(y_0, 1/k)\,, \qquad k=1,2,\ldots\,.
\end{equation*}
Let $k_0\in {\Bbb N}$ be such that $B(y_0, 1/k)\subset B(y_0,
\varepsilon_0)$ for all $k\geqslant k_0.$
In this case, observe that
\begin{equation}\label{eq7E}
f_k(\Gamma_k)>\Gamma(S(y_0, 1/k), S(y_0, \varepsilon_0), A(y_0,
1/k,\varepsilon_0))
\end{equation}
that is may be proved similarly to the relation~(\ref{eq3G}). It
follows from~(\ref{eq7E}) that
\begin{equation}\label{eq3I}
\Gamma_k>\Gamma_{f_k}(y_0, 1/k, \varepsilon_0)\,.
\end{equation}
We set $$\eta_{k}(t)=\left\{
\begin{array}{rr}
\psi(t)/I(1/k, \varepsilon_0), & t\in (1/k, \varepsilon_0)\,,\\
0,  &  t\not\in (1/k, \varepsilon_0)\,,
\end{array}
\right. $$
where $I(1/k, \varepsilon_0)=\int\limits_{1/k}^{\varepsilon_0}\,\psi
(t)\, dt.$ Observe that
$\int\limits_{1/k}^{\varepsilon_0}\eta_{k}(t)\,dt=1.$ Then by the
relations~(\ref{eq3.7.4}) and~(\ref{eq3I}), due to the definition of
$f_k$ in~(\ref{eq2*A}) we obtain that
\begin{gather*}M_p(\Gamma_k)\leqslant M_p(\Gamma_{f_k}(y_0, 1/k,
\varepsilon_0))\\
\leqslant \frac{1}{I^p(1/k, \varepsilon_0)}\int\limits_{A(y_0, 1/k,
\varepsilon_0)} Q(y)\cdot\psi^{\,p}(|y-y_0|)\,dm(y)\rightarrow
0\quad \text{as}\quad k\rightarrow\infty\,. \end{gather*}
The latter contradicts with~(\ref{eq5A}). Lemma is proved for the
case $y_0\ne\infty.$

Let us consider the case $y_0=\infty.$ Applying the inversion
$\psi(y)=\frac{y}{|y|^2},$ we consider the family of mappings
$\widetilde{f}_k:=\psi\circ f_k.$ Now, due to the compactness of
$\overline{{\Bbb R}^n},$ we may assume that $y_k\rightarrow
0\in{\Bbb R}^n$ as $k\rightarrow\infty$ for some sequence $y_k\in
\widetilde{f}_k(C_k)$ while $\widetilde{f}_k$ satisfy the
relation~(\ref{eq2*A}) for $y_0=0$ with
$\widetilde{Q}(y)=Q\left(\frac{y}{|y|^2}\right).$ Then the proof of
the lemma made for $f_k,$ $k=1,2,\ldots, $ repeats for mappings
$\widetilde{f}_k,$ $k=1,2,\ldots ,$ by the scheme given
above.~$\Box$ \end{proof}

\medskip
{\it Proof of Theorem~\ref{th2}} immediately follows by
Lemma~\ref{lem4} and Proposition~\ref{pr6}.~$\Box$

\medskip
\begin{example}
Let $f_m(x)=mx\,,$ $m=1,2,\ldots ,$ $f_m:{\Bbb R}^n\rightarrow {\Bbb
R}^n.$ Observe that, $f_m$ satisfies the
relations~(\ref{eq2*A})--(\ref{eqA2}) with $Q\equiv 1$ and $p=n,$
see remarks made in the Introduction. Note that, the family
$\{f_m\}_{m=1}^{\infty}$ is not equicontinuous at the origin, and
$\{f_m\}_{m=1}^{\infty}$ is equicontinuous at any another point
$b\in {\Bbb R}^n.$ Observe that, the relation $h(f_m(a),
f_m(b))\geqslant \delta$ with some $\delta>0$ and all $m=1,2\ldots$
may hold only for $a=0$ and $b\ne 0.$ However,
$\{f_m\}_{m=1}^{\infty}$ does not satisfy the condition 2) of
Theorem~\ref{th1} and the condition 1) of Theorem~\ref{th2} because
$\{f_m\}_{m=1}^{\infty}$ is not equicontinuous at $a=0.$ Obviously,
the conclusions of these theorems do not hold for this family of
mappings.
\end{example}

\medskip The given example indicates that in Theorems~\ref{th1} and~\ref{th2} it is
impossible, generally speaking, to get rid of the condition: ``the
family $\frak{F}^{p, \delta}_{Q, a, b}(D)$ is equicontinuous at the
points $a$ and $b$''. Moreover, this same example indicates that,
generally speaking, one cannot make do with just one such point,
say, point $a,$ since this is still not sufficient for a positive
conclusion. On the other hand, it is obvious that condition $h(f(a),
f(b))\geqslant \delta$ cannot be discarded in the definition of the
class $\frak{F}^{p, \delta}_{Q, a, b}(D).$

\section{Proof of Theorem~\ref{th3}}

We may consider that $y_0=0,$ $\delta(y_0)=1$ and $D={\Bbb
B}^n=\{x\in{\Bbb R}^n: |x|<1\}.$ Let us firstly consider the case
$p=n.$ Define a sequence of homeomorphisms $g_m:{\Bbb
B}^n\rightarrow {\Bbb B}^n,$ $g_m({\Bbb B}^n)={\Bbb B}^n$ by
$$g_m(x)=\frac{x}{|x|}\,\rho_m(|x|)\,,\qquad g_m(0):=0\,,$$
where
%
$$\rho_m(r)= \exp\left\{-\int\limits_{r}^1\frac{dt}{tq_{0,
m}^{1/(n-1)}(t)}\right\}\,, \qquad q_{0,
m}(r):=\frac{1}{\omega_{n-1}r^{n-1}}\int\limits_{|x|=r}Q_m(x)\,dS\,,$$
%
%
$$Q_m(x)\quad=\quad \left \{\begin{array}{rr} Q(x) , & \ |x|> 1/m\ ,
\\ 1\ ,  &  |x|\leqslant 1/m\,.
\end{array} \right.$$
%
Set
%
$$\rho(r)=
\exp\left\{-\int\limits_{r}^1\frac{dt}{tq_{0}^{1/(n-1)}(t)}\right\}\,,
\qquad
q_{0}(r):=\frac{1}{\omega_{n-1}r^{n-1}}\int\limits_{|x|=r}Q(x)\,dS\,.
$$
%
Since $Q$ is locally integrable, $q_0(t)\ne \infty$ for almost all
$t>0.$ Consequently, the function $\rho$ is strictly monotone; in
particular, $\rho^{\,-1}(r)$ is well-defined. Let
$f_m(x):=g^{\,-1}_m(x).$ We may calculate directly that
$$f_m(x)=\begin{cases}\frac{x}{m}\exp\{I_m\}\,,&|x|\leqslant \exp\{-I_m\}\,,
\\ \frac{x}{|x|}\rho^{\,-1}(|x|)\,, &\exp\{-I_m\}\leqslant |x|<1\end{cases}\,,$$
where
$I_m:=\int\limits_{\frac{1}{m}}^1\frac{dt}{tq^{\frac{1}{n-1}}_0(t)}.$
Observe that, the sequence $f_m$ satisfies the
relations~(\ref{eq2*A})--(\ref{eqA2}) at the origin, that is proved
in \cite[Theorem~3.10]{Sev$_1$}. At the same time, the sequence
$f_m$ converges to the mapping $f$ uniformly in ${\Bbb B}^n,$ where
$f$ is defined as
$$f(x)=\begin{cases}0\,,&|x|\leqslant \exp\{-I_0\}\,,
\\ \frac{x}{|x|}\rho^{\,-1}(|x|), &\exp\{-I_0\}\leqslant |x|<1\end{cases}\,,$$
where $I_0:=\int\limits_{0}^{\delta(y_0)}
\frac{dt}{tq_{y_0}^{\frac{1}{n-1}}(t)}<\infty$ by the assumption.
So, if we take a sequence of continua $C_m\equiv C,$ where $C$ is a
fixed continuum lying in the ball $B(0, \exp\{-I_0\}),$ we observe
that $h(f_m(C))\rightarrow 0$ as $m\rightarrow \infty.$ By the
construction of $f_m,$ they fix infinitely many points in the ring
$\exp\{-I_0\}\leqslant |x|<1,$ in particular, the family $\{f_m\}$
is equicontinuous at these points.

\medskip
Let us consider the similar construction for $p\ne n.$ Define a
sequence of homeomorphisms $g_m:{\Bbb B}^n\rightarrow {\Bbb B}^n,$
$g_m({\Bbb B}^n)={\Bbb B}^n$ by
$$g_m(x)=\frac{x}{|x|}\,\rho_m(|x|)\,,\qquad g_m(0):=0\,,$$
where
%
$$\rho_m(r)=\left(1+\frac{n-p}{p-1}\int\limits_{r}^{1}\frac{dt}{t^{\frac{n-1}{p-1}}\,
q_{0, m}^{\frac{1}{p-1}}(t)}\right)^{\frac{p-1}{p-n}}\,, \qquad
q_{0,
m}(r):=\frac{1}{\omega_{n-1}r^{n-1}}\int\limits_{|x|=r}Q_m(x)\,dS\,,$$
%
%
$$Q_m(x)\quad=\quad \left \{\begin{array}{rr} Q(x) , & \ |x|> 1/m\ ,
\\ 1\ ,  &  |x|\leqslant 1/m\,.
\end{array} \right.$$
%
Set
%
$$\rho(r)=
\left(1+\frac{n-p}{p-1}\int\limits_{r}^{1}\frac{dt}{t^{\frac{n-1}{p-1}}\,
q_{0}^{\frac{1}{p-1}}(t)}\right)^{\frac{p-1}{p-n}}\,, \qquad
q_{0}(r):=\frac{1}{\omega_{n-1}r^{n-1}}\int\limits_{|x|=r}Q(x)\,dS\,.
$$
%
Since $Q$ is locally integrable, $q_0(t)\ne \infty$ for almost all
$t>0.$ Consequently, the function $\rho$ is strictly monotone; in
particular, $\rho^{\,-1}(r)$ is well-defined. Let
$f_m(x):=g^{\,-1}_m(x).$ We may calculate directly that
$$f_m(x)=\begin{cases}\frac{x}{|x|}\left(r^{\frac{p-n}{p-1}}-1+
\frac{1}{m^{\frac{p-n}{p-1}}}-\frac{n-p}{p-1}I_m\right)^{\frac{p-1}{p-n}}
\,,&|x|\leqslant J_m\,,
\\ \frac{x}{|x|}\rho^{\,-1}(|x|)\,, &J_m\leqslant |x|<1\end{cases}\,,$$
where $I_m:=\int\limits_{\frac{1}{m}}^1\frac{dt}{t^{\frac{n-1}{p-1}}
q^{\frac{1}{p-1}}_0(t)}$ and
$J_m:=\left(1+\frac{n-p}{p-1}\int\limits_{1/m}^{1}\frac{dt}{t^{\frac{n-1}{p-1}}\,
q_{0}^{\frac{1}{p-1}}(t)}\right)^{\frac{p-1}{p-n}}.$
Observe that, the sequence $f_m$ satisfies the
relations~(\ref{eq2*A})--(\ref{eqA2}) at the origin, that is proved
in \cite[Theorem~4]{SalSev}. At the same time, the sequence $f_m$
converges to the mapping $f$ uniformly in ${\Bbb B}^n,$ where $f$ is
defined as
$$f(x)=\begin{cases}0\,,&|x|\leqslant J_0\,,
\\ \frac{x}{|x|}\rho^{\,-1}(|x|), &J_0\leqslant |x|<1\end{cases}\,,$$
where
$J_0:=\left(1+\frac{n-p}{p-1}\int\limits_{0}^{1}\frac{dt}{t^{\frac{n-1}{p-1}}\,
q_{0}^{\frac{1}{p-1}}(t)}\right)^{\frac{p-1}{p-n}}>0$ because by the
assumption $\int\limits_{0}^{1}\frac{dt}{t^{\frac{n-1}{p-1}}\,
q_{0}^{\frac{1}{p-1}}(t)}<\infty.$ So, if we take a sequence of
continua $C_m\equiv C,$ where $C$ is a fixed continuum lying in the
ball $B(0, J_0),$ we observe that $h(f_m(C))\rightarrow 0$ as
$m\rightarrow \infty.$ By the construction of $f_m,$ they fix
infinitely many points in the ring $J_0\leqslant |x|<1,$ in
particular, the family $\{f_m\}$ is equicontinuous at these
points.~$\Box$

\section{On Koebe-Bloch theorem and Orlicz-Sobolev classes}

Let $D\subset {\Bbb R}^n,$ $f:D\rightarrow {\Bbb R}^n$ be a discrete
open mapping, $\beta: [a,\,b)\rightarrow {\Bbb R}^n$ be a path, and
$x\in\,f^{\,-1}(\beta(a)).$ A path $\alpha: [a,\,c)\rightarrow D$ is
called a {\it maximal $f$-lifting} of $\beta$ starting at $x,$ if
$(1)\quad \alpha(a)=x\,;$ $(2)\quad f\circ\alpha=\beta|_{[a,\,c)};$
$(3)$\quad for $c<c^{\prime}\leqslant b,$ there is no a path
$\alpha^{\prime}: [a,\,c^{\prime})\rightarrow D$ such that
$\alpha=\alpha^{\prime}|_{[a,\,c)}$ and $f\circ
\alpha^{\,\prime}=\beta|_{[a,\,c^{\prime})}.$ Similarly, we may
define a maximal $f$-lifting $\alpha: (c,\,b]\rightarrow D$ of a
path $\beta: (a,\,b]\rightarrow {\Bbb R}^n$ ending at
$x\in\,f^{\,-1}(\beta(b)).$ The following assertion holds
(see~\cite[Lemma~3.12]{MRV}).

\medskip
\begin{proposition}\label{pr3A}
{\it Let $f:D\rightarrow {\Bbb R}^n,$ $n\geqslant 2,$ be an open
discrete mapping, let $x_0\in D,$ and let $\beta: [a,\,b)\rightarrow
{\Bbb R}^n$ be a path such that $\beta(a)=f(x_0)$ and such that
either $\lim\limits_{t\rightarrow b}\beta(t)$ exists, or
$\beta(t)\rightarrow \partial f(D)$ as $t\rightarrow b.$ Then
$\beta$ has a maximal $f$-lifting $\alpha: [a,\,c)\rightarrow D$
starting at $x_0.$ If $\alpha(t)\rightarrow x_1\in D$ as
$t\rightarrow c,$ then $c=b$ and $f(x_1)=\lim\limits_{t\rightarrow
b}\beta(t).$ Otherwise $\alpha(t)\rightarrow \partial D$ as
$t\rightarrow c.$}
\end{proposition}

\medskip
The version of the following result was obtained by the second
co-author in~\cite{ST$_1$}. Now we obtain some more general form of
it as a corollary from Theorem~\ref{th2}.

\medskip
\begin{theorem}\label{th5}\,(Koebe-Bloch theorem for $p$-modulus, cf.~\cite{ST$_1$}).
{\it\,Assume that, any $f\in\frak{F}^{p, \delta}_{Q, a, b}(D)$ is
open and discrete, besides that, all the conditions of
Theorem~\ref{th2} are satisfied. Then a family $\frak{F}^{p,
\delta}_{Q, a, b}(D)$ is uniformly open on every compactum $K,$
i.e., for every $\varepsilon_0>0$ there exists $r_0=r_0(K,
\varepsilon_0)>0$ such that $B_h(f(x_0), r_0)\subset f(B(x_0,
\varepsilon_0))$ for every $f\in \frak{F}^{p, \delta}_{Q, a, b}(D)$
and every $B(x_0, \varepsilon_0)\subset K,$ where $B_h(f(x_0),
r_0)=\{y\in\overline{{\Bbb R}^n}: h(y, f(x_0))<r_0\}.$}
\end{theorem}

\medskip
\begin{proof}
Mainly we apply the arguments used in \cite{ST$_1$} Assume the
contrary, i.e., there exists a compactum $K$ in $D$ such that
$\frak{F}^{p, \delta}_{Q, a, b}(D)$ is not uniformly open on $K.$
Then there exists $\varepsilon_0>0$ such that for any $m\in {\Bbb
N}$ there exists $x_m\in K$ and $f_m\in \frak{F}^{p, \delta}_{Q, a,
b}(D)$ such that $B(x_m, \varepsilon_0)\subset K$ and $B_h(f_m(x_m),
1/m)\setminus f_m(B(x_m, \varepsilon_0))\ne\varnothing.$ Let $y_m\in
B_h(f_m(x_m), 1/m)\setminus f_m(B(x_m, \varepsilon_0)).$ We may
consider that $f_m(x_m)$ and $y_m$ converge to some point
$\omega_{*}$ as $m\rightarrow \infty.$ We may consider
$\omega_{*}\ne\infty,$ otherwise we consider
$\widetilde{f}_m:=\psi\circ f_m,$ $\psi(x)=\frac{x}{|x|^2},$ instead
$f_m$ follow.

\medskip
Join the points $f_m(x_m)$ and $y_m$ by the segment
$r_m(t)=f_m(x_m)+t(y_m-f_m(x_m)),$ $t\in [0, 1].$ Since $|r_m|\cap
f_m(B(x_m, \varepsilon_0))\ne\varnothing\ne |r_m|\setminus
f_m(B(x_m, \varepsilon_0)),$ by \cite[Theorem~1.I.5, \S46]{Ku} there
is a point $z_m=r_m(t_m)\in
\partial f_m(B(x_m, \varepsilon_0)).$ Without loss of generality, we
may assume that the path $\beta_m:=r_m|_{[0, t_m)}$ lies in
$f_m(B(x_m, \varepsilon_0)).$ Let $\alpha_m$ be a maximal
$f_m$-lifting of $\beta_m$ starting at $x_m$ (it exists by
Proposition~\ref{pr3A}). By the same Proposition either
$\alpha_m(t)\rightarrow x_1\in B(x_m, \varepsilon_0)$ as
$t\rightarrow c_m-0$ (in this case, $c_m=1$ and $f_m(x_1)=y_m$), or
$\alpha_m(t)\rightarrow S(x_m, \varepsilon_0)$ as $t\rightarrow
c_m.$ Observe that, the first situation is excluded. Indeed, if
$f_m(x_1)=y_m,$ then $y_m\in f_m(B(x_m, \varepsilon_0)),$ that
contradicts the choice of $y_m.$ Thus, $\alpha_m(t)\rightarrow
S(x_m, \varepsilon_0)$ as $t\rightarrow c_m.$ Observe that,
$\overline{|\alpha_m|}$ is a continuum in $\overline{B(x_m,
\varepsilon_0)}$ and $d(\overline{|\alpha_m|})\geqslant d(x_m,
S(x_m, \varepsilon_0))=\varepsilon_0.$ Since
$\overline{|\alpha_m|}\subset \overline{B(x_m,
\varepsilon_0)}\subset K$ and $K$ is a continuum, it follows that
$h(\overline{|\alpha_m|})\geqslant \varepsilon^{\,*}_0$ for
$m=1,2,\ldots$ and some $\varepsilon^{\,*}_0>0,$ as well. On the
other hand, $f_m(|\alpha_m|)\subset |\beta_m|$ and
$h(|\beta_m|)\leqslant|f_m(x_m)-y_m|<1/m,$ $m\in {\Bbb N}.$ Since
$f_m(x_m)$ and $y_m$ converge to some point $\omega_{*}$ as
$m\rightarrow \infty$ and $\omega_{*}\ne\infty,$ we have that
$h(|\beta_m|)\rightarrow 0$ as $m\rightarrow\infty,$ as well. The
latter contradicts the statement of Theorem~\ref{th2}. The obtained
contradiction proves the theorem.~$\Box$
\end{proof}

\medskip
We set
\begin{gather*}l\left(f^{\,\prime}(x)\right)\,=\,\min\limits_{h\in {\Bbb R}^n
\backslash \{0\}} \frac {|f^{\,\prime}(x)h|}{|h|}\,,\quad
J(x,f)=\det f^{\,\prime}(x)\,,\\
K_{I}(x,f)\quad =\quad\left\{
\begin{array}{rr}
\frac{|J(x,f)|}{{l\left(f^{\,\prime}(x)\right)}^n}, & J(x,f)\ne 0,\\
1,  &  f^{\,\prime}(x)=0, \\
\infty, & {\rm otherwise}
\end{array}
\right.\,.\end{gather*}
Let $\varphi:[0,\infty)\rightarrow[0,\infty)$ be a non-decreasing
function, $f$ be a locally integrable vector function of $n$ real
variables $x_1,\ldots,x_n,$ $f=(f_1,\ldots,f_n),$ $f_i\in
W_{loc}^{1,1},$ $i=1,\ldots,n.$ We say that $f:D\rightarrow {\Bbb
R}^n$ belongs to the class $W^{1,\varphi}_{loc},$ we write $f\in
W^{1,\varphi}_{loc},$ if $$\int\limits_{G}\varphi\left(|\nabla
f(x)|\right)\,dm(x)<\infty$$ for any compact subdomain of $G\subset
D,$ where $|\nabla
f(x)|=\sqrt{\sum\limits_{i=1}^n\sum\limits_{j=1}^n\left(\frac{\partial
f_i}{\partial x_j}\right)^2}.$ Let $f:D\rightarrow D^{\,\prime},$
$x_0\in D$ and $y_0=f(x_0).$ Assume that, $f$ is a homeomorphism and
denote by $g:=f^{\,-1}.$ Observe that
$$
g(\Gamma(S(y_0, r_1)), S(y_0, r_2), f(D))=\Gamma_f(y_0, r_1, r_2)\,.
$$
Taking into account the above and applying \cite[Theorem~2.2]{KR},
cf.~\cite[Theorem~5.6]{Sev$_2$}, we obtain the following.

\medskip
\begin{proposition}\label{pr2B}
{\it\, Let $g:D^{\,\prime}\rightarrow D,$ $n\geqslant 3,$ be a
homeomorphism in $W_{\rm loc}^{1, \varphi}(D^{\,\prime})$ with
$K_I(y, g)\in L^1_{\rm loc}(D^{\,\prime}),$ where $K_I(x, f)$ is
defined above, and let $\varphi:[0,\infty)\rightarrow[0,\infty)$ be
a non-decreasing function. Assume that
\begin{equation}\label{eqOS3.0a_1}
\int\limits_{1}^{\infty}\left[\frac{t}{\varphi(t)}\right]^
{\frac{1}{n-2}}dt<\infty\,.
\end{equation}
Then $f=g^{\,-1}:D\rightarrow D^{\,\prime},$ $D^{\,\prime}=f(D),$
satisfies (\ref{eq2*A})--(\ref{eqA2}) for every $y_0\in {\Bbb R}^n$
and $0<r_1<r_2<d_0=\sup\limits_{y\in D^{\,\prime}}|y-y_0|,$ where
$Q=K_I(y, g).$}
\end{proposition}

\medskip
Let us also formulate the consequence of Theorems~\ref{th1},
\ref{th2} and~\ref{th5} for Orlicz-Sobolev classes. Given $a, b\in
D,$ $a\ne b,$ a Lebesgue measurable function $Q:D\rightarrow [0,
\infty],$ a non-decreasing function
$\varphi:[0,\infty)\rightarrow[0,\infty)$ and $\delta>0$ we define
the family $\frak{OS}^{\varphi, \delta}_{Q, a, b}(D)$ of all
homeomorphisms $f:D\rightarrow {\Bbb R}^n,$ $n\geqslant 3,$ such
that $g:=f^{\,-1}$ belongs to $W_{\rm loc}^{1, \varphi}(f(D)),$
$h(f(a), f(b))\geqslant \delta$ and $K_I(y, f^{\,-1})\leqslant Q(y)$
for almost all $y\in f(D).$ The following statement is true.

\medskip
\begin{theorem}\label{th1A} {\it Let $D$ be a domain in ${\Bbb R}^n,$
$n\geqslant 3,$ let $a, b\in D,$ $a\ne b,$ let $\delta>0$ and let
$Q:D\rightarrow [0, \infty]$ be a Lebesgue measurable function.
Assume that the following conditions hold:

\medskip
1) the domain $D$ is $p$-uniform,

2) the family $\frak{OS}^{\varphi, \delta}_{Q, a, b}(D)$ is
equicontinuous at the points $a$ and $b,$

3) $\varphi$ satisfies the Calderon condition~(\ref{eqOS3.0a_1}),

4) at least one of the conditions $3_1)$--$3_2)$ in
Theorem~\ref{th1} holds.

\medskip
Then the following holds: given $\varepsilon>0$ there is
$\delta_1(\varepsilon)>0$ such that $h(f(C))\geqslant \delta_1$ for
any $f\in \frak{OS}^{\varphi, \delta}_{Q, a, b}(D)$ and any
continuum $C\subset D$ with $h(C)\geqslant \varepsilon.$  }
\end{theorem}

\medskip
{\it\, Proof} directly follows from Theorem~\ref{th1} and
Proposition~\ref{pr2B}.~$\Box$

\medskip
\begin{theorem}\label{th2A} {\it Let $D$ be a domain in ${\Bbb R}^n,$
$n\geqslant 3,$ let $a, b\in D,$ $a\ne b,$ let $\delta>0$ and let
$Q:D\rightarrow [0, \infty]$ be a Lebesgue measurable function.
Assume that the following conditions hold:

\medskip
1) the family $\frak{OS}^{\varphi, \delta}_{Q, a, b}(D)$ is
equicontinuous at the points $a$ and $b,$

2) $\varphi$ satisfies the Calderon condition~(\ref{eqOS3.0a_1}),

3) at least one of the conditions $2_1)$--$2_2)$ in
Theorem~\ref{th1} holds.

\medskip
Then the following holds: given $\varepsilon>0$ there is
$\delta_1(\varepsilon)>0$ such that $h(f(C))\geqslant \delta_1$ for
any $f\in \frak{OS}^{\varphi, \delta}_{Q, a, b}(D)$ and any
continuum $C\subset K$ with $h(C)\geqslant \varepsilon.$  }
\end{theorem}

\medskip
{\it\, Proof} directly follows from Theorem~\ref{th2} and
Proposition~\ref{pr2B}.~$\Box$

\medskip
\begin{theorem}\label{th5A}\,(Koebe-Bloch theorem for Orlicz-Sobolev classes).
{\it\,Assume that, all the conditions of Theorem~\ref{th2A} are
satisfied. Then a family $\frak{OS}^{\varphi, \delta}_{Q, a, b}(D)$
is uniformly open on every compactum $K$ in $D,$ i.e., for every
$\varepsilon_0>0$ there exists $r_0=r_0(K, \varepsilon_0)>0$ such
that $B_h(f(x_0), r_0)\subset f(B(x_0, \varepsilon_0))$ for every
$f\in \frak{OS}^{\varphi, \delta}_{Q, a, b}(D)$ and every $B(x_0,
\varepsilon_0)\subset K,$ where $B_h(f(x_0),
r_0)=\{y\in\overline{{\Bbb R}^n}: h(y, f(x_0))<r_0\}.$}
\end{theorem}

\medskip
{\it\, Proof} directly follows from Theorem~\ref{th5} and
Proposition~\ref{pr2B}.~$\Box$

\medskip
The paper is published in the preprint form, see \cite{RS}.

\medskip
{\bf Declarations.}

\medskip
{\bf Conflicts of interest.} The author has no financial or
proprietary interests in any material discussed in this article.

\medskip
{\bf Availability of data and material.} The datasets generated
and/or analysed during the current study are available from the
corresponding author on reasonable request.

\medskip
{\bf Acknowledgements.} The work was supported by the National
Research Foundation of Ukraine (Project ``Analogues of
Carath\'{e}odory and Koebe-Bloch theorems for Orlycz-Sobolev
classes'', Project number 2025.02/0010).

{\footnotesize

}

\medskip
{\bf CONTACT INFORMATION}

\medskip
{\bf \noindent Denys Romash} \\
Zhytomyr Ivan Franko State University,  \\
40 Velyka Berdychivs'ka Str., 10 008  Zhytomyr, UKRAINE \\
dromash@num8erz.eu

\medskip
{\bf \noindent Evgeny Sevost'yanov} \\
{\bf 1.} Zhytomyr Ivan Franko State University,  \\
40 Velyka Berdychivs'ka Str., 10 008  Zhytomyr, UKRAINE \\
{\bf 2.} Institute of Applied Mathematics and Mechanics\\
of NAS of Ukraine, \\
19 Henerala Batyuka Str., 84 116 Slov'yansk,  UKRAINE\\
esevostyanov2009@gmail.com

\end{document}